\documentclass{amsart}

\usepackage[T1]{fontenc}
\usepackage[utf8]{inputenc}
\usepackage{lmodern}
\usepackage{amsmath,amssymb,mathtools}

\usepackage{amsthm} 
\usepackage{graphicx} 
\usepackage{enumerate}
\usepackage{amsfonts} 
\usepackage{ulem} 
\usepackage{float} 
\usepackage{hyperref} 
\usepackage{cite} 
\usepackage[dvipsnames]{xcolor} 
\usepackage{tikz} 
\usepackage{tikz-cd} 
\usepackage{faktor} 
\usepackage{array} 
\usepackage{transparent} 
\usepackage{mathdots} 
\usepackage{adjustbox} 
\usepackage{thmtools} 
\usepackage{thm-restate} 
\usepackage[margin=1.25in]{geometry} 
\usepackage{mathtools} 

\theoremstyle{plain}
\newtheorem{theorem}{Theorem}[section]
\newtheorem{lemma}[theorem]{Lemma}
\newtheorem{proposition}[theorem]{Proposition}
\newtheorem{corollary}[theorem]{Corollary}
\theoremstyle{definition}
\newtheorem{definition}[theorem]{Definition}
\theoremstyle{remark}
\newtheorem{remark}[theorem]{Remark}

\newtheorem*{thm*}{Theorem}

\newtheorem*{ack}{Acknowledgements}

\DeclareMathOperator{\Aut}{Aut}
\DeclareMathOperator{\Out}{Out}
\DeclareMathOperator{\Inn}{Inn}
\DeclareMathOperator{\Comm}{Comm}

\DeclareMathOperator{\GL}{GL}
\DeclareMathOperator{\Homeo}{Homeo}
\DeclareMathOperator{\St}{St}
\DeclareMathOperator{\Rist}{Rist}
\DeclareMathOperator{\stab}{stab}
\DeclareMathOperator{\cyl}{Cyl}
\DeclareMathOperator{\ad}{Ad}

\title[Automorphism-invariant refinements]{Automorphism-invariant refinements of weakly branch actions via overlap functions}
\date{}

\author{Armando Martino}

\begin{document}

\begin{abstract}
Let a finitely generated group $G$ act weakly branch on a locally finite rooted tree $T$ with boundary $\partial T$. The rooted tree structure is encoded by the \textit{overlap function}, which is our name for the Gromov product on the boundary:
\[
  c(\xi,\eta)=|\xi\wedge\eta|.
\]
We axiomatise this function and show that when it is `admissible', one can recover the rooted tree.

By the boundary rigidity theorem of Lavreniuk and Nekrashevych, $\Aut(G)$ acts canonically on $\partial T$. We therefore form the automorphism symmetrisation of the overlap function:
\[
  \widehat c(\xi,\eta)
  =\inf_{\alpha\in\Aut(G)}c(\alpha \cdot\xi,\alpha \cdot\eta).
\]

We prove that \(\widehat c\) is again an admissible \(G\)-invariant overlap
function and that its associated tree \(\widehat T\) is locally finite.
The action of \(G\) on \(\widehat T\) is faithful and weakly branch, and is
branch if and only if the original action on \(T\) is branch. Moreover,
\[
N_{\operatorname{Aut}(\widehat T)}(G)
\cong
\operatorname{Aut}(G).
\]
In particular, \(\operatorname{Aut}(G)\) is weakly branch. We also
describe the finite weakly branch extensions of \(G\): they are precisely
the pullbacks of finite subgroups of \(\operatorname{Out}(G)\). If \(G\)
is branch, all these extensions are branch. In both cases, they act on the same tree
\(\widehat T\).
\end{abstract}

\maketitle

\section{Introduction}

The goal of this paper is to elaborate on the fascinating result of Lavreniuk and Nekrashevych:

\begin{theorem}[{\cite[Theorem 7.3]{LavreniukNekrashevych}}] 
	
	If \( G\) has a weakly branch action on the rooted tree \( T\), then 
	\[
	N_{\Homeo(\partial T)}(G) \cong \Aut(G).
	\]
\end{theorem}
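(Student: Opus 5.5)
The isomorphism will be \(\Phi\colon N_{\Homeo(\partial T)}(G)\to\Aut(G)\), \(h\mapsto(g\mapsto hgh^{-1})\). It is well defined because \(G\) acts faithfully on \(\partial T\) (a weakly branch action is faithful), so conjugation by a normalising homeomorphism is an automorphism of \(G\). Clearly \(\Phi\) is a homomorphism. I will prove injectivity and surjectivity separately.

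\emph{Injectivity.} Suppose \(h\) centralises \(G\) and \(h\xi\neq\xi\) for some \(\xi\in\partial T\). By continuity there is a vertex \(v\) with \(\xi\in\partial T_v\) and \(h(\partial T_v)\cap\partial T_v=\emptyset\). Weak branchness gives a nontrivial \(g\in\Rist_G(v)\). Then \(hgh^{-1}\) is supported in \(h(\partial T_v)\), which is disjoint from \(\partial T_v\). So \(hgh^{-1}\neq g\), a contradiction. Hence \(h=\mathrm{id}\).

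\emph{Surjectivity.} Given \(\alpha\in\Aut(G)\), I must build a homeomorphism \(h\) with \(hgh^{-1}=\alpha(g)\). I will follow the Rubin-type reconstruction of the action from the group structure.
\begin{enumerate}
\item Encode open sets algebraically. For a clopen set \(U\subseteq\partial T\), let \(\Rist_G(U)\) be the subgroup of elements supported in \(U\). The key fact is that the subgroups \(\Rist_G(\partial T_v)\), or the closely related double centralisers \(C_G(C_G(\Rist_G(v)))\), are determined by group-theoretic properties invariant under \(\alpha\).
\item Use commutation to detect disjointness. For \(g\in G\), the support of \(g\) is the open set \(\mathrm{supp}(g)=\overline{\{\xi: g\xi\neq\xi\}}^{\circ}\). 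By weak branchness, \(C_G(g)\) consists of elements acting in a way compatible with \(\mathrm{supp}(g)\). More precisely, I will show \(\mathrm{supp}(g)\cap\mathrm{supp}(f)=\emptyset\) if and only if \([g',f']=1\) for all conjugates \(g'\) of \(g\) by elements of \(C_G(f)^{\perp}\). This is the standard Rubin argument, which uses the fact that rigid stabilisers are nontrivial on every open set, so that \(G\) is locally moving.
\item Conclude via Rubin's theorem. Rubin's theorem gives the result directly: a locally moving group of homeomorphisms of a locally compact Hausdorff space with no isolated points (here \(\partial T\) is a Cantor set) determines the space, so any isomorphism between two such groups is induced by a unique homeomorphism. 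Applying it to \(\alpha\colon G\to G\) yields \(h\) with \(\alpha(g)=hgh^{-1}\). Thus \(h\in N_{\Homeo(\partial T)}(G)\) and \(\Phi(h)=\alpha\).
\end{enumerate}

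\emph{Main obstacle.} The main obstacle is verifying the hypotheses of Rubin's theorem, or reproving it by hand. I must show that for every nonempty open \(U\) the rigid stabiliser \(\Rist_G(U)\) acts nontrivially, and that \(\partial T\) has no isolated points.

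Nontriviality follows because \(U\) contains some \(\partial T_v\) and \(\Rist_G(v)\neq1\). For the absence of isolated points, I will argue that \(T\) can be assumed to have no vertices of degree one below the root. Indeed, on the part of \(T\) where \(G\) acts, each \(\Rist_G(v)\neq 1\) moves some point below \(v\), so every cylinder \(\partial T_v\) contains at least two points, and hence no point of \(\partial T\) is isolated.

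If I prefer an intrinsic argument to citing Rubin, I would define \(h\) via ultrafilters of "regular open" subgroups \(C_G(C_G(g))\). The image of \(\partial T_v\) is identified by \(\alpha(\Rist_G(v))\), and a point is sent to the intersection of the nested images. The delicate point is that these images are again clopen sets whose diameters shrink. This should follow from continuity arguments applied symmetrically to \(\alpha^{-1}\).
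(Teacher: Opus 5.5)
Your overall structure matches the paper, which simply quotes two facts from Lavreniuk--Nekrashevych: the centraliser of $G$ in $\Homeo(\partial T)$ is trivial, and every automorphism is induced by a homeomorphism. Your injectivity argument correctly proves the first fact: a centralising $h$ that moves $\xi$ would conjugate a nontrivial element of $\Rist_G(v)$ to one with disjoint support.

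The gap is in surjectivity, which is the real content, because the form of Rubin's theorem you invoke is false. A locally moving group does not determine the locally compact Hausdorff space it acts on. Thompson's group $F$ acts faithfully and locally movingly both on $[0,1]$ and, by dyadic prefix replacements, on the Cantor set, yet these spaces are not homeomorphic. The point-reconstruction version of Rubin's theorem needs \emph{local density}: for every $x$ and every open $U\ni x$, the closure of $\Rist_G(U)\cdot x$ must have nonempty interior.

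For a weakly branch action, local density says that each $\Rist_G(v)$ has boundedly many orbits on the levels of $T_v$. This is automatic for branch actions, but you have not checked it, and it does not follow from weak branchness. For example, interleave two trees so that $\partial T=\partial T_A\times\partial T_B$. Let $R\le\Aut(T)$ be the subgroup fixing the $\partial T_B$-coordinate of every point, and let $K$ be a level-transitive group acting on the $\partial T_B$-coordinate alone. Then $RK$ is weakly branch, but every rigid stabiliser of a vertex at level at least $2$ lies in $R$, whose orbits stay in the fibres $\partial T_A\times\{b\}$.

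Your intrinsic fallback does not close the gap either. Step~1 is wrong as stated, since $\alpha$ need not send $\Rist_G(v)$ to the rigid stabiliser of any vertex of $T$; this is exactly what happens in Section~\ref{sec:example}. Step~2 is not a well-defined criterion. And ``the images are again clopen'' is precisely the point that needs an argument.

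What Rubin's theory does give for locally moving actions is control of the regular open algebra. The automorphism $\alpha$ induces an automorphism $\tau$ of the Boolean algebra $\mathrm{RO}(\partial T)$ of regular open sets, with $\alpha(\Rist_G(U))=\Rist_G(\tau U)$ and $\tau(gU)=\alpha(g)\tau(U)$. The missing idea is to use level-transitivity to pass from regular open sets to points:
\begin{enumerate}
\item Fix $n$. The sets $\tau(\cyl(v))$, $v\in T_n$, are pairwise disjoint open sets permuted by $G$. Their union is dense, because their join is $\partial T$.
\item The complement of that union is closed, nowhere dense and $G$-invariant. It is therefore empty, since every $G$-orbit on $\partial T$ is dense.
\item Hence each $\tau(\cyl(v))$ is clopen, so $\tau$ preserves the algebra of clopen sets. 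The same holds for $\tau^{-1}$, which is induced by $\alpha^{-1}$.
\item Stone duality then yields a homeomorphism $h$ with $h(C)=\tau(C)$ for every clopen $C$, and hence $hgh^{-1}=\alpha(g)$.
\end{enumerate}
With this step your outline becomes a proof. As written, surjectivity rests on a misstated theorem whose actual hypotheses you have not verified, and which can fail for weakly branch actions.
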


That is, every automorphism of \( G\) induces and is induced by a homeomorphism of the boundary of \( T\). Our result will be to show that, up to changing the underlying tree \( T\) to \( \widehat{T}\), we can realise every automorphism of \( G\) as an automorphism of \( \widehat{T}\), while the action remains weakly branch. Our methods require that \( G\) be finitely generated. As a consequence, \( \Aut(G)\) will also be weakly branch in its action on \( \widehat{T}\).

This belongs to a broader circle of rigidity and reconstruction results
for groups acting on rooted trees and their boundaries; see
\cite{GrigorchukWilsonUniqueness,FarinaAsategui}.

\medskip

Recall that an action of the group \( G\) on a rooted tree \( T\) is called \textit{weakly branch} if \( G\) acts transitively on every level (the vertices at a given distance from the root) and every rigid stabiliser is non-trivial; equivalently, for every vertex \( v\) of \( T\) there is some non-trivial group element \( 1 \neq g_v\) whose support lies entirely in the subtree below \( v\). See Definition~\ref{def:weakly-branch} and \cite{BRS}, \cite{Garrido2015}.

Weakly branch groups form a large and varied class. They include all branch groups---in particular, the classical Grigorchuk and Gupta--Sidki groups and the many families of GGS and spinal groups---but also genuinely non-branch examples such as the Basilica group. More broadly, they arise naturally among self-similar groups and have provided a rich source of finitely generated groups with striking algebraic, geometric and dynamical properties; see, for example, \cite{BRS,Garrido2015}. The first Grigorchuk group was the first known group of intermediate growth and is a finitely generated infinite torsion group. It also provided the first example of an amenable group that is not elementarily amenable \cite{Grigorchukburnside,Grigorchukdegrees}.

\medspace

Our construction is motivated by the use of length functions to encode
group actions on trees. Based length functions and translation length
functions play a central role in geometric group theory, notably in
Chiswell's theorem - see \cite{Chiswell}. They are poorly suited, however, to
actions on rooted trees. Every automorphism of a rooted tree fixes the
root, and hence both its translation length and its displacement of the
root are zero. Choosing a basepoint at level \(n\) gives information only
about a finite truncation of the tree.

The natural geometry instead lies on the boundary. Given two ends
\(\xi,\eta\in\partial T\), define
\[
c(\xi,\eta)=|\xi\wedge\eta|,
\]
where \( |\xi\wedge\eta| \) is the length of their maximal common intial segment, and \( \infty\) if they are equal.  This is precisely the Gromov product
\((\xi\mid\eta)_o\) on the boundary of the rooted tree, although the term
\textit{overlap function} better reflects its elementary tree-theoretic
meaning.

For a fixed end \(\xi\), the quantities
\[
c(\xi,g\xi),
\qquad g\in G,
\]
measure how far along the ray \(\xi\) one travels before the action of
\(g\) moves it elsewhere. This gives a natural replacement for a based
length function. It nevertheless favours a chosen end and its orbit.
We therefore consider the overlap function
\[
c\colon \partial T\times\partial T
\longrightarrow
\mathbb N\cup\{\infty\}.
\]
The entire rooted tree can be reconstructed from this function: the
vertices at level \(n\) are the equivalence classes for
\[
\xi\sim_n\eta
\quad\Longleftrightarrow\quad
c(\xi,\eta)\geq n.
\]
See Proposition~\ref{prop:reconstruction}.

The overlap-function description makes common refinements particularly
transparent. Suppose that \(c_1\) and \(c_2\) are overlap functions on the
same boundary. At level \(n\), the equivalence relation associated to
\[
\min\{c_1,c_2\}
\]
is the intersection of the equivalence relations associated to \(c_1\)
and \(c_2\). Its classes are therefore precisely the nonempty
intersections of a level-\(n\) cylinder for the first tree with a
level-\(n\) cylinder for the second. Consequently,
\(\min\{c_1,c_2\}\) encodes the common refinement of the two rooted trees,
simultaneously at every level.

By the theorem of Lavreniuk
and Nekrashevych above, when \( G\) has a weakly branch action on \( T\),  \(\Aut(G)\) acts canonically and faithfully on
\(\partial T\), and this action satisfies
\[
\alpha\cdot(g\xi)
=
\alpha(g)(\alpha\cdot\xi)
\]
for every \(\alpha\in\Aut(G)\), \(g\in G\), and
\(\xi\in\partial T\).

We define the twisted overlap function by
\[
c^\alpha(\xi,\eta)
=
c\bigl(\alpha \cdot\xi,\alpha \cdot \eta\bigr).
\]
Alternatively, one may pre-compose the original action of \(G\) on \(T\)
by \(\alpha\), obtaining the twisted \(G\)-tree \(\alpha T\). We show
that these two operations agree: the tree \(T_{c^\alpha}\) reconstructed
from \(c^\alpha\) is naturally \(G\)-equivariantly isomorphic to
\(\alpha T\).

The central construction of the paper is the common refinement of all
these twisted trees. In terms of overlap functions, it is given by the
single formula
\[
\widehat c
=
\inf_{\alpha\in\operatorname{Aut}(G)}c^\alpha.
\]
By construction, \(\widehat c\) is invariant under the canonical action
of \(\operatorname{Aut}(G)\) on the boundary. The principal difficulty is
that an infinite common refinement of finite clopen partitions need not
itself have finitely many parts. Thus it is not immediate that
\(\widehat c\) determines a locally finite tree.

The key input is Lemma~\ref{lem:finiteness}. It states that if a group \( G\) acts by homeomorphisms on a topological space and admits a dense orbit, then the number of \(G \)-invariant partitions into \( d\) clopen subsets is bounded by the number of subgroups of index \( d\). In particular, only finitely many such partitions exist when \(G\) is finitely generated. This applies to the boundary action of any finitely generated group acting level-transitively on a rooted tree, since every orbit on the boundary is then dense.

%

At level \(n\), every
twisted function \(c^\alpha\) gives a \(G\)-invariant clopen partition of
\(\partial T\) with
\[
k_n=|T_n|
\]
parts. Only finitely many distinct such partitions can therefore occur.
Although \(\widehat c\) is defined as an infimum over all automorphisms,
at each fixed level it is consequently determined by a finite common
refinement.

This gives our main result, Theorem~\ref{thm:main}, with the following consequences.
Let \(G\) be a finitely generated group acting faithfully and weakly branch
on a locally finite rooted tree \(T\). We construct another locally finite
rooted tree \(\widehat{T}\) such that:
\begin{itemize}
	\item \(G\) acts faithfully and weakly branch on \(\widehat{T}\);
	\item there is a natural level-preserving surjective simplicial
	\(G\)-map
	\[
	p\colon \widehat{T}\longrightarrow T;
	\]
	\item the action of \(G\) on \(\widehat{T}\) is branch if and only if
	its action on \(T\) is branch;
	\item \(\operatorname{Aut}(G)\) acts faithfully and weakly branch on
	\(\widehat{T}\), and its restriction to
	\(\operatorname{Inn}(G)\cong G\) is the \(G\)-action;
	\item conjugation induces an isomorphism
	\[
	N_{\operatorname{Aut}(\widehat{T})}(G)
	\cong \operatorname{Aut}(G).
	\]
\end{itemize}

We note that \cite[Theorem 7.5]{LavreniukNekrashevych} proves this for \textit{saturated} actions. In that sense, our contribution is to remove the need for that hypothesis at the cost of changing the tree. In fact, in the case of saturated actions our construction would yield that \( \widehat{T} = T\). We also construct toy examples showing that \( T \neq \widehat{T}\) in general, in Section~\ref{sec:example}.

The construction also has consequences for finite extensions, Theorem~\ref{thm:finite-extensions}. Suppose that \(G\)
is finitely generated and weakly branch, and \( \Gamma \) contains \( G\) as a finite index normal subgroup. Then \( \Gamma \) is weakly branch if and only if the natural conjugation map is injective: 

\[
\Gamma \hookrightarrow \Aut(G).
\]

Moreover, when this map is injective, \( \Gamma\) is branch if and only if \( G\) is branch.  
Hence, finite extensions of \( G\) that are (weakly) branch are all realised as pullbacks of finite subgroups of \( \Out(G)\).  Moreover, all of them act as (weakly) branch groups on the same tree
\(\widehat T\).

Finally, we show how one might apply these techniques for the specific case of Grigorchuk's group in Section~\ref{sec:grigorchuk-extensions}, although this is mostly for interest and much of the material there is already known.

\begin{ack}
	I would like to thank Jorge Fari\~{n}a-Asategui for an inspiring talk on his work from \cite{FarinaAsategui}, which motivated me to think about this problem. I would also like to thank Laurent Bartholdi for encouraging comments on an earlier draft of this paper. 
\end{ack}

\section{Rooted trees and branch actions}

Let $T$ be a rooted tree with root $o$. For $n\geq 0$, write
\[
  T_n=\{v\in T:d(o,v)=n\}
\]
for the $n$-th level of $T$. If $v\in T$, the rooted subtree consisting of $v$ and all its descendants is denoted by $T_v$.

We shall assume that rooted trees are locally finite and have no leaves. The boundary $\partial T$ is the set of (geodesic) infinite rays beginning at $o$. Here we consider a ray to be a non-backtracking infinite edge-path or vertex-path.

This boundary has a natural topology whose basis is given by the cylinders: given a vertex $v$ of $T$, the cylinder at $v$ consists of all the (geodesic) infinite rays starting at $o$ which pass through $v$. 

We shall denote the cylinder at the vertex \( v \) by \( \cyl(v) \).

Cylinder sets are clopen. If $T$ is locally finite, this topology makes $\partial T$ a compact, metrisable and totally disconnected space. In particular, $\partial T$ is Hausdorff.

If, in addition, $\partial T$ has no isolated points, for example, if every ray encounters branching infinitely often, then $\partial T$ is a Cantor space.

The group $\Aut(T)$ consists of the graph automorphisms of $T$ which fix the root. Every element of $\Aut(T)$ preserves levels and induces a homeomorphism of $\partial T$.

Let $G\leq\Aut(T)$. For a vertex $v\in T$, its stabiliser is
\[
  \St_G(v)=\{g\in G:gv=v\}.
\]
The rigid stabiliser of $v$ is
\[
  \Rist_G(v)=\{g\in G:gw=w\text{ for every }w\notin T_v\}.
\]
Thus an element of $\Rist_G(v)$ acts trivially outside the subtree rooted at $v$.

For $n\geq 0$, the pointwise stabiliser of the $n$-th level is
\[
  \St_G(n)=\bigcap_{v\in T_n}\St_G(v),
\]
and the rigid stabiliser of the $n$-th level is
\[
  \Rist_G(n)=\langle\Rist_G(v):v\in T_n\rangle.
\]
Since the subtrees $T_v$, $v\in T_n$, are pairwise disjoint, the groups $\Rist_G(v)$ commute pairwise, and hence
\[
  \Rist_G(n)=\prod_{v\in T_n}\Rist_G(v).
\]

\begin{definition}
The action $G\curvearrowright T$ is \textit{level-transitive} if $G$ acts transitively on $T_n$ for every $n\geq 0$.
\end{definition}

\begin{definition}\label{def:weakly-branch}
A group $G\leq\Aut(T)$ is \textit{weakly branch} on $T$ if its action is level-transitive and
\[
  \Rist_G(v)\neq 1
\]
for every vertex $v\in T$.

Equivalently, $G$ is weakly branch if it is level-transitive and $\Rist_G(v)$ is infinite for every $v$.

For a level-transitive action, it is enough to require that one rigid stabiliser at each level be nontrivial.
\end{definition}

\begin{definition}
A group $G\leq\Aut(T)$ is \textit{branch} on $T$ if its action is level-transitive and
\[
  [G:\Rist_G(n)]<\infty
\]
for every $n\geq 0$.
\end{definition}

Every infinite branch group is weakly branch. Indeed, if $\Rist_G(n)$ has finite index, then it is nontrivial; by level-transitivity, all vertex rigid stabilisers at level $n$ are conjugate, and hence each of them is nontrivial.

\begin{remark}
	Every \( g \in G \leq \Aut(T)\) induces a homeomorphism of the boundary, since cylinders are mapped to cylinders. Moreover, if \( \cyl(v)\) is the cylinder at \( v\), then \( g \in \Rist_G(v) \) if and only if its action is supported in \( \cyl(v)\).  
\end{remark}

We note some elementary results.

\begin{lemma}
	\label{lem:centreless}
	Let \( G\) be weakly branch. Then \( G\) is centreless. Thus, \( G \cong \Inn(G)\).
\end{lemma}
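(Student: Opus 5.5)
To show $G$ is centreless, take a central element $z\in Z(G)$ and prove that it fixes every vertex of $T$. Since $G\le\Aut(T)$ acts faithfully, this forces $z=1$.

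Suppose instead that $z$ moves some vertex $v$, so $zv=w\neq v$. Both $v$ and $w$ lie on the same level $T_n$, because elements of $\Aut(T)$ preserve levels. Vertices on the same level are never ancestors of one another, so the subtrees $T_v$ and $T_w$ are disjoint. By weak branchness, $\Rist_G(v)$ contains some $g\neq 1$. Then $zgz^{-1}$ lies in $\Rist_G(zv)=\Rist_G(w)$: it fixes every vertex outside $T_w$, since conjugation transports supports. But $z$ is central, so $zgz^{-1}=g$. Hence $g$ fixes every vertex outside $T_v$ and also every vertex outside $T_w$. Every vertex lies outside at least one of these two disjoint subtrees, so $g$ fixes all of $T$. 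Faithfulness then gives $g=1$, a contradiction.

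It follows that $z$ fixes every vertex, so $z=1$ and $Z(G)=1$. The map $G\to\Inn(G)$, $g\mapsto(x\mapsto gxg^{-1})$, is a surjective homomorphism with kernel $Z(G)$. It is therefore an isomorphism $G\cong\Inn(G)$.

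The only point needing care is the conjugation identity $z\Rist_G(v)z^{-1}=\Rist_G(zv)$. To check it, let $u\notin T_{zv}$. Then $z^{-1}u\notin T_v$, because $z$ maps $T_v$ onto $T_{zv}$. So $g$ fixes $z^{-1}u$, and hence $zgz^{-1}$ fixes $u$. Level-transitivity is not needed for this argument; only the non-triviality of rigid stabilisers is used.
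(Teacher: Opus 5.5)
Your proof is correct and follows essentially the same route as the paper's: conjugating a nontrivial element of $\Rist_G(v)$ by the central element $z$ lands it in $\Rist_G(zv)$, and since rigid stabilisers of distinct vertices on the same level intersect trivially, $z$ must fix every vertex and hence is trivial by faithfulness. You simply write out in full the conjugation identity and the trivial-intersection step that the paper states without proof.
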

\begin{proof}
	If \( z \in Z(G)\) then, for any vertex, 
	\[
	\Rist_G(v) = z \Rist_G(v) z^{-1} = \Rist_G(zv).
	\]
	Since rigid stabilisers are all non-trivial, and rigid stabilisers of distinct vertices at the same level intersect trivially, we deduce that \( z\) acts trivially on the tree and hence \( z =1\), as the action is faithful.
\end{proof}

\begin{lemma}
	\label{lem:no finite normal subs}
	Let \( G\) be weakly branch. Then any finite normal subgroup of \( G\) is trivial. 
\end{lemma}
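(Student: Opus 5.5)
Let \(N \trianglelefteq G\) be a finite normal subgroup. The plan is to show that \(N\) is centralised by a finite index subgroup of \(G\), and that such an element must commute with a non-trivial rigid stabiliser at every vertex. The argument then mirrors Lemma~\ref{lem:centreless}, using conjugation of rigid stabilisers.

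\textbf{Step 1.} Since \(N\) is finite and normal, \(G\) acts on \(N\) by conjugation through a finite quotient. The kernel \(C = C_G(N)\) therefore has finite index in \(G\).

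\textbf{Step 2.} Fix \(1 \neq n \in N\). Suppose \(n\) acts non-trivially on \(T\). Then there is a vertex \(v\) with \(nv \neq v\), and hence \(\cyl(v) \cap \cyl(nv) = \emptyset\). We would like a non-trivial element of \(\Rist_G(v) \cap C\). Since \(C\) has finite index, \(\Rist_C(v) = \Rist_G(v) \cap C\) has finite index in \(\Rist_G(v)\). By Definition~\ref{def:weakly-branch}, \(\Rist_G(v)\) is infinite, so \(\Rist_C(v)\) is infinite, and in particular non-trivial. Pick \(1 \neq h \in \Rist_C(v)\).

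\textbf{Step 3.} Since \(h\) commutes with \(n\), we have
\[
h = n h n^{-1} \in \Rist_G(nv).
\]
So \(h\) is supported in \(\cyl(v) \cap \cyl(nv) = \emptyset\), which forces \(h\) to act trivially. By faithfulness \(h = 1\), a contradiction. Hence every \(n \in N\) acts trivially on \(T\), and faithfulness gives \(N = 1\).

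The only delicate point is Step 2: we need the rigid stabiliser to meet the finite index subgroup \(C\) non-trivially. This is exactly where the equivalent formulation in Definition~\ref{def:weakly-branch}, that rigid stabilisers are infinite, is used. If one wanted to avoid that, one could instead note that each element of \(\Rist_G(v)\) has only finitely many conjugates under \(N\). A cleaner alternative is to observe that \(\Rist_G(v)\) is normalised by \(\St_G(v)\), and to apply the finite-index argument within a suitable finite-index stabiliser. The infiniteness route is the one I would use.
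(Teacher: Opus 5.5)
Your proof is correct, but it takes a different route from the paper's.

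\textbf{Your route.} You use finiteness of $N$ to get that $C_G(N)$ has finite index. You then use the ``rigid stabilisers are infinite'' form of Definition~\ref{def:weakly-branch} to find $1\neq h\in\Rist_G(v)\cap C_G(N)$. This gives $h=nhn^{-1}\in\Rist_G(v)\cap\Rist_G(nv)=1$.

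\textbf{The paper's route.} The paper uses finiteness of $N$ and faithfulness differently: it chooses a level $n$ with $N\cap\St_G(n)=1$. It takes $x\in N$ moving a vertex $v$ on that level and an arbitrary $1\neq r\in\Rist_G(v)$. The commutator $[x,r]$ lies in $N$ by normality. It also lies in $\St_G(n)$, since both $r$ and $xrx^{-1}$ fix level $n$ pointwise. Hence $[x,r]=1$, and $r=xrx^{-1}\in\Rist_G(xv)$ gives the same contradiction.

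\textbf{What each buys.} The commutator trick needs only $\Rist_G(v)\neq 1$. Your route needs rigid stabilisers to be infinite. The paper asserts this as equivalent, but it needs a short extra argument (products of rigid stabilisers of descendants of $v$). On the other hand, your argument only uses that each element of $N$ has a centraliser of finite index. So it shows directly that no non-trivial element of $G$ has a finite conjugacy class. It is also the same mechanism the paper later uses in Lemma~\ref{lem:aut into comm}.

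The alternatives sketched in your final paragraph are unnecessary. If you want to avoid the infiniteness of rigid stabilisers, the commutator argument above is the natural way to do it.
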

\begin{proof}
	Let \(N\) be a finite normal subgroup of \(G\). Since the action is
	faithful, there is a level \(n\) such that
	\[
	N\cap\St_G(n)=1.
	\]
	Suppose that \(1\neq x\in N\). Then \(x\) moves some vertex \(v\) at
	level \(n\). Choose
	\[
	1\neq r\in\Rist_G(v).
	\]
	Since \(N\) is normal,
	\[
	[x,r]\in N.
	\]
	Both \(r\) and \(xrx^{-1}\) fix level \(n\) pointwise, so
	\[
	[x,r]\in\St_G(n).
	\]
	It follows that
	\[
	[x,r]\in N\cap\St_G(n)=1.
	\]
	Thus \(xrx^{-1}=r\). But
	\[
	xrx^{-1}\in\Rist_G(xv),
	\]
	where \(xv\neq v\), again contradicting the fact that rigid stabilisers
	of distinct vertices on the same level have trivial intersection.
	Therefore \(N=1\).
\end{proof}

\section{Overlap functions and rooted trees}

This section is concerned with encoding a locally finite rooted tree \( T\) via an `overlap function'. The idea here is that vertices of \( T\) give rise to cylinders, which are clopen subsets of \( \partial T\). Vertices at level \( n\) then give a finite partition of  \( \partial T\) into clopen subsets. 

Therefore, we can view the tree \( T\) as being (equivalent to) a nested sequence of finite clopen partitions of \( \partial T\). An admissible overlap function - Definitions~\ref{def:overlap} and \ref{def: admissible} - is just a single object for recording that data. 

\medspace

Throughout this paper, the natural numbers \( \mathbb{N}\) shall be assumed to contain \( 0\).

\begin{definition}
	\label{def:overlap}
Let $X$ be a set. An \textit{overlap function} on $X$ is a function
\[
  c:X\times X\longrightarrow\mathbb N\cup\{\infty\}
\]
such that, for all $\xi,\eta,\zeta\in X$,
\begin{enumerate}[(i)]
\item $c(\xi,\xi)=\infty$;
\item $c(\xi,\eta)=c(\eta,\xi)$;
\item
\(
  c(\xi,\zeta)\geq\min\{c(\xi,\eta),c(\eta,\zeta)\}.
\)
\end{enumerate}
\end{definition}
\begin{remark}
	Note that this is just the Gromov product where \( X\) will be the boundary of a tree. Condition (iii) is then 0-hyperbolicity.
\end{remark}

We shall use overlap functions to re-construct trees. This is a well-understood idea. Indeed, with the convention \(2^{-\infty}=0\), the function
\[
d_c(\xi,\eta)=2^{-c(\xi,\eta)}
\]
is the natural (pseudo) ultrametric on the boundary. The correspondence between
rooted trees and ultrametric end spaces is developed systematically
in~\cite{Hughes}.

\begin{definition}
	\label{def:level n equivalence}
	Given an overlap function \( c\) on a set \( X\) we define for each $n\geq 0$ an equivalence relation
	\[
	\xi\sim_n^c\eta\quad\Longleftrightarrow\quad c(\xi,\eta)\geq n.
	\]
	This is the level \( n\) partition of \(X \) via \( c\).
\end{definition}
\begin{remark}
	The third overlap function axiom is precisely what is needed for transitivity.
\end{remark}

\begin{definition}
	\label{def: admissible}
Suppose that $X$ is compact and totally disconnected. An overlap function $c$ on \( X\) is called \textit{{admissible}} if:
\begin{enumerate}[(i)]
\item every relation $\sim_n^c$ has finitely many equivalence classes, each of which is clopen;
\item \( c\) separates points:
\(
  c(\xi,\eta)=\infty\quad\Longrightarrow\quad\xi=\eta.
\)
\end{enumerate}
\end{definition}

\begin{remark}
Note that a space which admits an admissible overlap function must be totally separated and hence totally disconnected.
\end{remark}

\begin{proposition}[Reconstruction from an overlap function]\label{prop:reconstruction}
Let $c$ be an admissible overlap function on a compact and totally disconnected space $X$. There is a locally finite rooted tree $T_c$ whose vertices at level $n$ are the classes of $\sim_n^c$, with adjacency given by containment of consecutive classes. The map
\[
  \pi_c: X\longrightarrow\partial T_c,
  \qquad
  \xi  \stackrel{\pi_c}{\longmapsto} \bigl([\xi]_{\sim_n^c}\bigr)_{n\geq 0},
\]
is a homeomorphism.

If a group $G$ acts on $X$ and
\[
  c(g\xi,g\eta)=c(\xi,\eta)
\]
for all $g\in G$, then $G$ acts by rooted automorphisms on $T_c$, and the homeomorphism above is \( G\)-equivariant.
\end{proposition}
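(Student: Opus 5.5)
We build the tree directly from the nested partitions. The vertex set of $T_c$ is the disjoint union over $n\geq 0$ of the finite sets $X/\!\sim_n^c$. An edge joins a class $A$ at level $n+1$ to the unique class $B$ at level $n$ containing it.

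\textbf{Step 1: $T_c$ is a locally finite rooted tree.} The relations are nested: $c(\xi,\eta)\geq n+1$ implies $c(\xi,\eta)\geq n$. Hence every level-$(n+1)$ class lies in exactly one level-$n$ class, so each non-root vertex has a unique parent. Since $c$ takes values in $\mathbb N\cup\{\infty\}$ with $\mathbb N\ni 0$, we have $c\geq 0$ everywhere. Thus $\sim_0^c$ has the single class $X$ (assuming $X\neq\emptyset$), which serves as the root $o$. Following parents from a level-$n$ vertex reaches $o$ in exactly $n$ steps and never repeats a vertex, so $T_c$ is a tree and levels coincide with distance from $o$. Local finiteness holds because admissibility condition (i) makes each level finite. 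There are no leaves, since every class is nonempty and so contains some point $\xi$, whose level-$(n+1)$ class is a child.

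\textbf{Step 2: $\pi_c$ is a bijection.} The sequence $([\xi]_n)_n$ is a ray from $o$, since consecutive terms are nested. For injectivity, suppose $\pi_c(\xi)=\pi_c(\eta)$. Then $c(\xi,\eta)\geq n$ for all $n$, so $c(\xi,\eta)=\infty$, and separation of points gives $\xi=\eta$.

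For surjectivity, a ray $(A_n)$ is a decreasing sequence of nonempty clopen, hence closed, subsets of the compact space $X$. So $\bigcap_n A_n\neq\emptyset$, and any point $\xi$ in the intersection satisfies $[\xi]_n=A_n$ for all $n$.

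This surjectivity step, where compactness is essential, is the only substantive point of the proof.

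\textbf{Step 3: $\pi_c$ is a homeomorphism.} The preimage under $\pi_c$ of the cylinder at a class $A$ is exactly $A$, which is open. Cylinders form a basis, so $\pi_c$ is continuous. A continuous bijection from a compact space to a Hausdorff space ($\partial T_c$, noted earlier) is a homeomorphism.

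\textbf{Step 4: the $G$-action.} Suppose $c(g\xi,g\eta)=c(\xi,\eta)$ for all $g\in G$. Then $g$ maps $\sim_n^c$-classes bijectively to $\sim_n^c$-classes and preserves containment, so it acts as a root-fixing graph automorphism of $T_c$. The identity and composition are respected because $G$ acts on $X$. Equivariance follows from $g[\xi]_n=[g\xi]_n$.
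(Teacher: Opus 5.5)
Your proposal is correct and follows essentially the same route as the paper: the nested partitions define the tree, separation of points gives injectivity, compactness of the nested nonempty closed classes gives surjectivity, cylinder preimages give continuity, a compact-to-Hausdorff argument gives the homeomorphism, and $g[\xi]_n=[g\xi]_n$ gives equivariance. You also spell out a few points the paper leaves implicit (the root as the single $\sim_0^c$-class, that the graph is a tree, absence of leaves) or treats separately in Corollary~\ref{cor: injective to boundary}.
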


\begin{proof}
The vertices of $T_c$ are the equivalence classes of $\sim_n^c$, with $n$ giving the distance from the root.

The relations are nested:
\[
  \sim_{n+1}^c\ \subseteq\ \sim_n^c.
\]
Hence every class at level $n+1$ lies in a unique class at level $n$, which defines the edges of $T_c$. Finiteness of the classes at each level gives local finiteness.

The displayed map is injective because the relations separate points. It is surjective because a ray in $T_c$ gives a nested sequence of nonempty compact classes. Their intersection is nonempty by compactness and consists of exactly one point by separation.

The map is continuous since the pre-image of a cylinder is precisely the equivalence class of some $\sim_n^c$. The map is then a continuous bijection from a compact space to a Hausdorff space and hence a homeomorphism.

If $c$ is $G$-invariant, then $G$ preserves every relation $\sim_n^c$ and therefore acts on the corresponding rooted tree. Moreover, the homeomorphism above is clearly \( G\)-equivariant in this case since \( ([g\xi]_{\sim_n^c}) = (g[\xi]_{\sim_n^c})  \). 
\end{proof}

\begin{corollary}
	\label{cor: injective to boundary}
	Let \( X\) be a compact, totally disconnected space with admissible overlap function, \( c\). Then the tree \( T_c\) constructed via Proposition~\ref{prop:reconstruction} has no leaves. In particular the map, 
	\[
	\Aut(T_c) \to \Homeo(\partial T_c), 
	\]
	is injective. 
\end{corollary}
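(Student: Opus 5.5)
We need two things: that $T_c$ has no leaves, and that the restriction map $\Aut(T_c)\to\Homeo(\partial T_c)$ is injective. The first follows from non-emptiness of classes. The second uses the first together with the fact that a rooted automorphism is determined by what it does to the vertices lying on rays.

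\emph{No leaves.} Every vertex $v$ of $T_c$ at level $n$ is, by construction, a nonempty class $[\xi]_{\sim_n^c}$ of $X$. Choose a point $\xi$ in it. Then $[\xi]_{\sim_{n+1}^c}$ is a class at level $n+1$ contained in $v$, because the relations are nested, $\sim_{n+1}^c\subseteq\sim_n^c$. By the definition of adjacency in Proposition~\ref{prop:reconstruction}, this class is a child of $v$. So every vertex has a child and $T_c$ has no leaves. Iterating, every vertex $v$ lies on the ray $\pi_c(\xi)$ for any $\xi\in v$. Equivalently, $\cyl(v)=\pi_c(v)$ is nonempty.

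\emph{Injectivity.} Any $\phi\in\Aut(T_c)$ fixes the root and preserves levels, so it maps rays from $o$ to rays from $o$. This gives the homeomorphism of $\partial T_c$ it induces, with $\phi(\cyl(v))=\cyl(\phi v)$.

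Suppose $\phi$ acts trivially on $\partial T_c$ and let $v$ be a vertex at level $n$. By the previous step there is a ray $\gamma$ through $v$. Then $\phi\gamma=\gamma$, so $\phi$ sends the level-$n$ vertex of $\gamma$, which is $v$, to the level-$n$ vertex of $\phi\gamma=\gamma$, which is again $v$. Thus $\phi v=v$ for every vertex. Since $T_c$ is a simple graph, $\phi$ is the identity. Hence the kernel of $\Aut(T_c)\to\Homeo(\partial T_c)$ is trivial.

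The only genuine point is the existence of a ray through every vertex, i.e.\ that no class is empty. This holds automatically because vertices are defined as equivalence classes of points of $X$. The rest is routine.
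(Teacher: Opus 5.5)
Your proposal is correct and follows the paper's argument: each vertex $[\xi]_{\sim_n^c}$ is nonempty and so has the child $[\xi]_{\sim_{n+1}^c}$. Hence every vertex lies on a ray, and an automorphism acting trivially on $\partial T_c$ must fix every vertex. You spell out the injectivity step in more detail than the paper, which simply calls it clear.
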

\begin{proof}
	Every vertex \([\xi]_{\sim_n^c}\) has a child \([\xi]_{\sim_{n+1}^c}\), so \(T_c\) has no leaves. The stated map is now clearly injective, since every vertex lies on some infinite ray from the root. 
\end{proof}


\begin{corollary}
	\label{cor:T=T_c}
	Let \(G\leq \Aut(T)\), where \(T\) is a locally finite rooted tree without leaves. Then
	\[
	c\colon \partial T\times\partial T\longrightarrow \mathbb N\cup\{\infty\},
	\qquad
	c(\xi,\eta):=|\xi\wedge\eta|,
	\]
	is an admissible \(G\)-invariant overlap function.
	
	For each \(n\), the map
	\[
	T_n\longrightarrow \partial T/{\sim_n^c},
	\qquad
	v\longmapsto \cyl(v),
	\]
	is a \(G\)-equivariant bijection, mapping a vertex to the cylinder it defines. Consequently, these maps assemble to a
	\(G\)-equivariant isomorphism
	\[
	T\longrightarrow T_c.
	\]
\end{corollary}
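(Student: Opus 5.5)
The plan is to check the overlap axioms and admissibility directly from the tree structure, then identify the level-$n$ classes of $\sim_n^c$ with cylinders, and finally assemble these level bijections into a tree isomorphism.

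\emph{Overlap axioms.} For rays $\xi,\eta$ starting at $o$, the common initial segment $\xi\wedge\eta$ is well defined. Since $T$ is a tree, rays agree exactly up to the vertex where they diverge, so $c(\xi,\eta)\ge n$ holds if and only if $\xi$ and $\eta$ pass through the same vertex of $T_n$. Axioms (i) and (ii) are immediate. For (iii), write $m=\min\{c(\xi,\eta),c(\eta,\zeta)\}$. Then $\xi$ and $\eta$ share their level-$m$ vertex, and so do $\eta$ and $\zeta$. Hence $\xi$ and $\zeta$ share it too, and $c(\xi,\zeta)\ge m$ (the case $m=\infty$ is clear). $G$-invariance holds because each $g\in\Aut(T)$ fixes $o$, preserves levels and maps rays to rays. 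Therefore $g\xi$ and $g\eta$ pass through the same level-$n$ vertex exactly when $\xi$ and $\eta$ do.

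\emph{Admissibility and level bijection.} By the characterisation above, the $\sim_n^c$-class of $\xi$ is exactly $\cyl(v)$, where $v$ is the level-$n$ vertex on $\xi$. Cylinders are clopen, and $T_n$ is finite by local finiteness, so there are finitely many classes. If $c(\xi,\eta)=\infty$, the rays share all their vertices and so are equal; this gives separation of points. The map $T_n\to\partial T/{\sim_n^c}$, $v\mapsto\cyl(v)$, needs to be well defined and bijective:
\begin{itemize}
\item \emph{Nonempty image and surjectivity.} Since $T$ has no leaves, every vertex $v$ lies on some infinite ray, so $\cyl(v)\neq\emptyset$ and $\cyl(v)$ is a genuine class. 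Conversely, every class is the cylinder of the level-$n$ vertex of any of its members, which gives surjectivity.
\item \emph{Injectivity.} Distinct $v,w\in T_n$ give disjoint, nonempty cylinders, since a ray meets level $n$ exactly once.
\item \emph{Equivariance.} $g\,\cyl(v)=\cyl(gv)$, because $g$ maps the rays through $v$ onto the rays through $gv$.
\end{itemize}

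\emph{Assembling.} It remains to see that these bijections respect edges. An edge joins $v\in T_{n+1}$ to its parent $u\in T_n$. We have $\cyl(v)\subseteq\cyl(u)$, which is precisely the adjacency in $T_c$ from Proposition~\ref{prop:reconstruction}. Conversely, if $\cyl(v)\subseteq\cyl(u)$ with $v\in T_{n+1}$ and $u\in T_n$, pick a ray through $v$ (it exists by the no-leaves hypothesis). That ray passes through $u$ at level $n$, so $u$ is the parent of $v$. The level maps therefore form a graph isomorphism preserving roots, and it is $G$-equivariant levelwise. The only delicate point is the repeated use of the no-leaves hypothesis to guarantee nonempty cylinders; without it, distinct vertices could give the same (empty) class. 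Everything else is routine.
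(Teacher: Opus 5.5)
Your proof is correct and follows the same route as the paper's: you identify the $\sim_n^c$-classes with the level-$n$ cylinders to get admissibility, use the no-leaves hypothesis to get nonempty cylinders and hence a levelwise bijection, match adjacency with containment of cylinders at consecutive levels, and get equivariance from $g\,\cyl(v)=\cyl(gv)$. The only difference is that you write out details the paper treats as immediate, namely the verification of axiom (iii) and both directions of the edge correspondence.
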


\begin{proof}
	The overlap axioms and \(G\)-invariance are immediate. For each \(n\), two rays
	are \(\sim_n^c\)-equivalent precisely when they pass through the same vertex at
	level \(n\). Thus the \(\sim_n^c\)-classes are exactly the level-\(n\)
	cylinders. They are finite in number and clopen, and the relations separate
	points, so \(c\) is admissible.
	
	Since \(T\) has no leaves, every vertex determines a nonempty cylinder, and
	hence the displayed map is a bijection on each level. Adjacency in both trees
	is given by containment of cylinders at consecutive levels. Finally,
	\[
	g\cyl(v)=\cyl(gv),
	\]
	so the resulting isomorphism is \(G\)-equivariant.
\end{proof}

%


\begin{lemma}\label{lem:overlap leq}
	Let \(X\) be a compact totally disconnected space, and let
	\[
	c,d\colon X\times X\longrightarrow\mathbb{N}\cup\{\infty\}
	\]
	be admissible overlap functions satisfying
	\[
	c\leq d.
	\]
	
	Then the following hold.
	\begin{enumerate}[(i)]
		\item There is a natural level-preserving surjective simplicial map
		\[
		p_{c,d}\colon T_c\longrightarrow T_d
		\]
		given on the vertices at level \(n\) by
		\[
		p_{c,d}\bigl([\xi]_{\sim_n^c}\bigr)
		=
		[\xi]_{\sim_n^d}.
		\]
		
		\item 
		
		 We have a commuting diagram,

		\[
		\begin{tikzcd}
			& X \arrow[dl, "\pi_c"'] \arrow[dr, "\pi_d"] & \\
			\partial T_c \arrow[rr, "\partial p_{c,d}"'] & & \partial T_d
		\end{tikzcd}
		\]
		where \( \partial p_{c,d}\) is the map induced on the boundaries by \( p_{c,d}\). 
		
		\item If a group \(G\) acts on \(X\), and both \(c\) and \(d\) are
		\(G\)-invariant, then \(p_{c,d}\) and \( \partial p_{c,d}\) are \(G\)-equivariant.
	\end{enumerate}
\end{lemma}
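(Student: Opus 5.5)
The plan is to work directly from the definitions of the trees $T_c$ and $T_d$ in Proposition~\ref{prop:reconstruction}. The basic observation is that $c\leq d$ gives, for every $n$, the containment of relations
\[
\sim_n^c\ \subseteq\ \sim_n^d .
\]
Indeed, if $c(\xi,\eta)\geq n$ then $d(\xi,\eta)\geq c(\xi,\eta)\geq n$. Hence every $\sim_n^c$-class is contained in a unique $\sim_n^d$-class.

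For (i), I would first note that this containment makes the formula $p_{c,d}([\xi]_{\sim_n^c})=[\xi]_{\sim_n^d}$ well defined, since it does not depend on the representative $\xi$. The map visibly preserves levels, and it is surjective on each level because every $d$-class is $[\xi]_{\sim_n^d}$ for some $\xi\in X$. To show it is simplicial, take an edge joining $[\xi]_{\sim_{n+1}^c}$ to its parent. That parent class contains $\xi$, so it equals $[\xi]_{\sim_n^c}$. The image of the edge is then the pair $[\xi]_{\sim_{n+1}^d}\subseteq[\xi]_{\sim_n^d}$, and this pair is an edge of $T_d$. Surjectivity on edges follows by the same choice of representative.

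For (ii), I would use the fact that a level-preserving simplicial map sends rays from the root to rays from the root, so $\partial p_{c,d}$ is defined. Then compute
\[
\partial p_{c,d}(\pi_c(\xi))=\bigl(p_{c,d}([\xi]_{\sim_n^c})\bigr)_n=\bigl([\xi]_{\sim_n^d}\bigr)_n=\pi_d(\xi).
\]
This shows the diagram commutes. As a bonus, since $\pi_c$ is a homeomorphism, the identity $\partial p_{c,d}=\pi_d\circ\pi_c^{-1}$ shows $\partial p_{c,d}$ is itself a homeomorphism.

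For (iii), the group $G$ acts on both trees by $g[\xi]=[g\xi]$, as in Proposition~\ref{prop:reconstruction}. The computation
\[
p_{c,d}(g[\xi]_{\sim_n^c})=[g\xi]_{\sim_n^d}=g\,p_{c,d}([\xi]_{\sim_n^c})
\]
gives equivariance of $p_{c,d}$. Equivariance of $\partial p_{c,d}$ then follows either from the ray-wise definition or from $\partial p_{c,d}=\pi_d\circ\pi_c^{-1}$, a composite of equivariant maps.

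No step is a genuine obstacle. The only point needing care is the well-definedness in (i), and the containment of relations handles it.
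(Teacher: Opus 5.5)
Your proposal is correct and follows the paper's argument: the containment $\sim_n^c\subseteq\sim_n^d$ makes $p_{c,d}$ a well-defined, level-preserving, surjective simplicial map; $\partial p_{c,d}$ sends $\pi_c(\xi)$ to $\pi_d(\xi)$; and equivariance is the same one-line computation. Your extra observation that $\partial p_{c,d}=\pi_d\circ\pi_c^{-1}$ is a homeomorphism is also correct, and it is the form in which the paper later uses this lemma in the proof of Theorem~\ref{thm:main}.
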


\begin{proof}
Recall the definition of \( \sim_n^c \) from Definition~\ref{def:level n equivalence}.

	Since \(c\leq d\), we have
	\[
	\xi \sim_n^c\eta
	\quad\Longrightarrow\quad
	\xi {\sim_n^d} \eta.
	\]

	Thus every \( \sim_n^c\)-class is contained in a unique
	\(\  {\sim_n^d}\)-class. We may therefore define
	\[
	p_{c,d}\bigl([\xi]_{\sim_n^c}\bigr)
	=
	[\xi]_{\sim_n^d}.
	\]
	This is well defined, is surjective and preserves levels and edges.  Hence \(p_{c,d}\) is a level-preserving surjective simplicial map, establishing (i).

	The map \( \partial p_{c,d}\) simply maps \( \pi_c( \xi) \) to \( \pi_d(\xi)\), hence we get a commuting diagram as in (ii).

	Finally, suppose that \(G\) acts on \(X\) and that both \(c\) and \(d\)
	are \(G\)-invariant. For \(g\in G\), we have
	\[
	\begin{aligned}
		p_{c,d}\bigl(g[\xi]_{\sim_n^c}\bigr)
		&=
		p_{c,d}\bigl([g\xi]_{\sim_n^c}\bigr)\\
		&=
		[g\xi]_{\sim_n^d}\\
		&=
		g[\xi]_{\sim_n^d}\\
		&=
		g\,p_{c,d}\bigl([\xi]_{\sim_n^c}\bigr).
	\end{aligned}
	\]
	Thus \(p_{c,d}\) is \(G\)-equivariant. Similarly, \( \partial p_{c,d}\) will also be equivariant by Proposition~\ref{prop:reconstruction}. 
\end{proof}
%
%

\section{Boundary rigidity}

For the remainder of the paper, let
\[
G\leq\Aut(T)
\]
be a finitely generated group acting faithfully and weakly branch on a
locally finite rooted tree \(T\).

We use the following boundary rigidity theorem of Lavreniuk and
Nekrashevych.

\begin{theorem}[{\cite[Theorem~7.3 and Lemma~5.4]
		{LavreniukNekrashevych}}]\label{thm:boundary-rigidity}
	Let \(G\leq\Aut(T)\) act faithfully and weakly branch on a locally finite
	rooted tree \(T\). Then conjugation induces an isomorphism
	\[
	N_{\Homeo(\partial T)}(G)\cong\Aut(G).
	\]
	Consequently, \(\Aut(G)\) acts canonically and faithfully on
	\(\partial T\), and this action satisfies
	\[
	\alpha\cdot(g\xi)
	=
	\alpha(g)(\alpha\cdot\xi)
	\]
	for every \(\alpha\in\Aut(G)\), \(g\in G\), and
	\(\xi\in\partial T\).
\end{theorem}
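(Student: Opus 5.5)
The plan is to establish the two halves of the isomorphism separately: the conjugation map $N_{\Homeo(\partial T)}(G)\to\Aut(G)$ is well defined and injective, and every automorphism of $G$ is induced by a homeomorphism of $\partial T$. Surjectivity is the substantial part; I would obtain it from Rubin's reconstruction theorem, or from a direct version of its proof.

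\textbf{Well-definedness and injectivity.} For $h\in N_{\Homeo(\partial T)}(G)$, the map $g\mapsto hgh^{-1}$ is an automorphism of $G$, so we have a homomorphism $\Phi\colon N_{\Homeo(\partial T)}(G)\to\Aut(G)$. Suppose $h\in\ker\Phi$, so $h$ centralises $G$, and suppose $h\xi\neq\xi$ for some $\xi$.
\begin{itemize}
\item Since $\partial T$ is Hausdorff and $h$ is continuous, there is a vertex $v$ on $\xi$ with $h(\cyl(v))\cap\cyl(v)=\emptyset$.
\item By weak branchness pick $1\neq r\in\Rist_G(v)$. Then $hrh^{-1}$ is supported in $h(\cyl(v))$.
\item But $hrh^{-1}=r$ is supported in $\cyl(v)$ and is nontrivial. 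This contradicts the disjointness of the two sets.
\end{itemize}
Hence $h=\mathrm{id}$, so $\Phi$ is injective. This is the argument of Lemma~\ref{lem:centreless}, transplanted to $\Homeo(\partial T)$.

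\textbf{Surjectivity.} The key observation is that the action of $G$ on $\partial T$ is \emph{locally moving}. Every nonempty open set contains a cylinder $\cyl(v)$, and $\Rist_G(v)\neq 1$ acts nontrivially with support inside it. Moreover $\partial T$ is compact Hausdorff, and it has no isolated points: a level-transitive action with nontrivial rigid stabilisers forces infinitely many ends in every cylinder. Rubin's theorem then says that any isomorphism between two locally moving groups of homeomorphisms of such spaces is induced by a unique homeomorphism. Applying it to $\alpha\colon G\to G$ yields $h_\alpha\in\Homeo(\partial T)$ with $h_\alpha g h_\alpha^{-1}=\alpha(g)$ for all $g\in G$, so $h_\alpha$ normalises $G$ and $\Phi(h_\alpha)=\alpha$. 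By injectivity $h_\alpha$ is unique, so $\alpha\mapsto h_\alpha$ is a homomorphism; this gives the canonical faithful action $\alpha\cdot\xi:=h_\alpha(\xi)$. The relation $h_\alpha g=\alpha(g)h_\alpha$, evaluated at $\xi$, is precisely $\alpha\cdot(g\xi)=\alpha(g)(\alpha\cdot\xi)$.

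\textbf{Main obstacle.} The hard step is the reconstruction itself, if one wants a self-contained argument rather than a citation. The route I would take goes through orthogonality.
\begin{itemize}
\item For $S\subseteq G$ write $S^\perp=C_G(S)$.
\item Show, using local moving and the disjointness trick from the injectivity step, that $g$ and $k$ have disjoint supports if and only if $[g,xkx^{-1}]=1$ for suitable families of $x$. More usefully, show that the double centralisers $\Rist_G(U)^{\perp\perp}$, for $U$ regular open, form a Boolean algebra isomorphic to the regular open algebra of $\partial T$.
\item This algebra is defined purely group-theoretically, so $\alpha$ induces an automorphism of it.
\item Points $\xi\in\partial T$ are recovered as the ultrafilters generated by the regular open neighbourhoods of $\xi$. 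These neighbourhoods are characterised algebraically: a family is a point filter iff it is \emph{convergent}, meaning that for any two members there is a third whose closure lies in both interiors.
\item Transporting these filters by $\alpha$ defines $h_\alpha$ on points, and continuity follows since $h_\alpha$ maps basic regular open sets to regular open sets.
\end{itemize}
Verifying that this convergence condition is expressible algebraically is the delicate point; it is exactly what Rubin's argument (and Lavreniuk--Nekrashevych's Lemma~5.4) handles. I would cite it rather than redo it.
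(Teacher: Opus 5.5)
Your proposal is correct and has the same structure as the paper's proof. The paper cites Lavreniuk--Nekrashevych for both halves (every automorphism is induced by a boundary homeomorphism, and the centraliser of $G$ in $\Homeo(\partial T)$ is trivial) and concludes that conjugation is a surjective homomorphism with trivial kernel. You instead prove the trivial-centraliser half directly by the disjoint-support argument and cite Rubin's theorem for surjectivity, which is legitimate because the boundary action is locally moving on a compact Hausdorff space without isolated points. Your optional reconstruction sketch is loose in places: the regular open neighbourhoods of a point form a filter in the regular open algebra, but in general not an ultrafilter. Since you explicitly defer to the citation for that step, this does not affect the argument.
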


\begin{proof}
	Lavreniuk and Nekrashevych prove that every automorphism of \(G\) is
	induced by a homeomorphism of \(\partial T\), and that the centraliser of
	\(G\) in \(\Homeo(\partial T)\) is trivial. Thus conjugation gives a
	surjective homomorphism
	\[
	N_{\Homeo(\partial T)}(G)\longrightarrow\Aut(G)
	\]
	with trivial kernel.
\end{proof}

\begin{remark}
	Let \(c\) and \(d\) be admissible \(G\)-invariant overlap functions on
	\(\partial T\). Then \(T_c\) and \(T_d\) are \(G\)-equivariantly isomorphic
	if and only if \(c=d\).
	
	Indeed, a \(G\)-equivariant isomorphism \(T_c\to T_d\) induces a
	\(G\)-equivariant homeomorphism of their boundaries. Under the natural
	identifications of both boundaries with \(\partial T\), this homeomorphism
	centralises \(G\), and is therefore the identity by
	Theorem~\ref{thm:boundary-rigidity}. The isomorphism preserves lengths of
	common initial segments, so \(c=d\). The converse is immediate.
\end{remark}

The following finiteness lemma will imply that the symmetrised overlap function is admissible; it is really the key technical lemma of this paper.

%
%
%
%

\begin{lemma}\label{lem:finiteness}
	Let a group $G$ act by homeomorphisms on a topological space $X$.
	Suppose that the action has a dense orbit.
	
	For every $d\geq 1$, the number of \(G\)-partitions of \(X\) into exactly
	\(d\) clopen subsets is bounded above by the number of subgroups of index
	\(d\) in \(G\).
	
	In particular, if \(G\) has only finitely many subgroups of index \(d\),
	then there are only finitely many such partitions. This holds, in
	particular, when \(G\) is finitely generated.
\end{lemma}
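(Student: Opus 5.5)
Fix a point $x_0\in X$ whose $G$-orbit is dense. By a $G$-partition we mean a partition $\mathcal P=\{A_1,\dots,A_d\}$ of $X$ into $d$ nonempty clopen sets that is permuted by $G$, that is, $gA_i\in\mathcal P$ for every $g\in G$ and every $i$. The plan is to build an injective map from the set of such partitions to the set of index-$d$ subgroups of $G$. We send $\mathcal P$ to
\[
H_{\mathcal P}=\{g\in G : gA=A\},
\]
the setwise stabiliser of the part $A\in\mathcal P$ that contains $x_0$.

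First, $G$ acts transitively on the parts of $\mathcal P$. Each part $A_i$ is a nonempty open set, so by density it contains some $gx_0$. Since $gx_0\in gA$ and $gA$ is a part of $\mathcal P$, disjointness of the parts forces $gA=A_i$. By orbit--stabiliser, $[G:H_{\mathcal P}]=|\mathcal P|=d$, so the map lands in the subgroups of index $d$.

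Next we show injectivity, which is the only real content. The same argument gives
\[
A\cap Gx_0=H_{\mathcal P}x_0 .
\]
Indeed, $gx_0\in A$ if and only if $gA=A$, because $gx_0\in gA$ and the parts are disjoint. We then recover $A$ from this intersection. Since $A$ is open and $Gx_0$ is dense, every neighbourhood of a point of $A$ meets $A\cap Gx_0$, so $A\subseteq\overline{A\cap Gx_0}$. Since $A$ is closed, the reverse inclusion also holds. Hence
\[
A=\overline{H_{\mathcal P}x_0},
\]
which depends only on $H_{\mathcal P}$. The whole partition is then determined as well, because by transitivity $\mathcal P=\{gA:g\in G\}$. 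Thus $\mathcal P\mapsto H_{\mathcal P}$ is injective, which proves the bound.

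For the final clause, any subgroup of index $d$ is determined by the action of $G$ on its $d$ cosets. Choosing a labelling of the cosets gives a homomorphism $G\to S_d$, and the subgroup is recovered as the stabiliser of the label of the trivial coset. When $G$ is finitely generated there are only finitely many homomorphisms $G\to S_d$, since each is determined by the images of the generators. Hence there are finitely many subgroups of index $d$, and so finitely many such partitions.

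I expect the main obstacle to be the injectivity step: it is the one place where both clopenness and density are genuinely needed, in order to recover $A$ as the closure of an orbit piece. Everything else is routine.
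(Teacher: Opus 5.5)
Your proof is correct and follows essentially the same approach as the paper. You send the partition to the stabiliser $H$ of the part containing the dense-orbit point, use density to get transitivity and hence index $d$, and recover that part as $\overline{Hx_0}$ using clopenness. Your route via $A\cap Gx_0=Hx_0$ is only a rephrasing of the paper's complement argument, and your justification of the finite-generation clause via homomorphisms $G\to S_d$ fills in a step the paper merely asserts.
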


\begin{proof}
Suppose that the orbit of \( x \in X\) is dense. Now consider a \(G\)-partition into \( d\) clopen subsets. Let \( C\) be the part of the partition containing \( x\) and \( H = \stab_G(C)\). We claim that \( H\) has index \( d\)	and the partition is determined by \( H\). 
	
First note that since the orbit of \( x\) is dense, the \(G\) orbit of \(C\) must cover \( X\); hence the induced \( G\) action on the partition is transitive and hence \( H\) has index \( d\). The closure \( \overline{Hx}\) of \( Hx \) is clearly contained in \( C\), as \( C\) is closed. The complement of \( \overline{Hx}\) in \( C\) is open as \( C\) is open and so, if non-empty, contains some \( gx\) as \( Gx\) is dense. Since the partition is a \( G\)-partition, we get \( g \in H\) and therefore \( \overline{Hx} = C\). The other parts of the partition are given by \( gC =  \overline{gHx}\).

Therefore, the entire partition is determined by the index \(d\)
subgroup \(H\), namely the stabiliser of the part containing \(x\).
Thus the map which assigns to a \(G\)-partition the stabiliser of its
part containing \(x\) is injective. This proves the claimed bound.

If \(G\) has only finitely many subgroups of index \(d\), it follows
that there are only finitely many such partitions. In particular, this
holds when \(G\) is finitely generated.

%

%
%
%
%
\end{proof}

\begin{remark}
The boundary action of a level-transitive group acting on a rooted tree is clearly minimal; \textit{every} orbit is dense, since the orbit of any point on the boundary meets every cylinder. 
\end{remark}


\section{The automorphism symmetrisation}

In this section we show that the automorphism symmetrisation of an admissible overlap function is again an admissible overlap function. We begin by analysing the effect of twisting by a single automorphism.

\begin{proposition}\label{prop:twisted-overlap}
	Let \(G\leq\Aut(T)\) act faithfully and weakly branch on a locally finite
	rooted tree \(T\), and let
	\[
	c\colon\partial T\times\partial T
	\longrightarrow \mathbb{N}\cup\{\infty\}
	\]
	be the overlap function associated to \(T\).
	
	Let \(\alpha\in\Aut(G)\), acting on \(\partial T\) via the canonical
	boundary action of Theorem~\ref{thm:boundary-rigidity}. 
	
	
	Define
	\[
	c^\alpha(\xi,\eta)
	=
	c(\alpha\cdot\xi,\alpha\cdot\eta)
	\]
	for \(\xi,\eta\in\partial T\). Also, let \(\alpha T\) denote the rooted
	tree \(T\) equipped with the precomposed \(G\)-action
	\[
	g\cdot_{\alpha T}v
	=
	\alpha(g)\cdot_Tv.
	\]
	
	Then the following hold.
	\begin{enumerate}[(i)]
		\item The function \(c^\alpha\) is a \(G\)-invariant admissible overlap function on
		\(\partial T\).
		
		
		\item There is a natural \(G\)-equivariant isomorphism of rooted trees
		\[
		\Phi_\alpha\colon T_{c^\alpha}\longrightarrow \alpha T.
		\]
		
%
	\end{enumerate}
\end{proposition}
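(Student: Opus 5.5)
The plan is to transport everything through the boundary homeomorphism \(\alpha\cdot(-)\colon\partial T\to\partial T\). By Theorem~\ref{thm:boundary-rigidity} this map is a homeomorphism, so write \(h_\alpha(\xi)=\alpha\cdot\xi\). Then \(c^\alpha=c\circ(h_\alpha\times h_\alpha)\) is the pullback of \(c\) along \(h_\alpha\).

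For (i), the overlap axioms transfer directly from \(c\), since \(h_\alpha\) is a bijection; Corollary~\ref{cor:T=T_c} already gives these axioms for \(c\). For admissibility, note that
\[
\xi\sim_n^{c^\alpha}\eta\iff h_\alpha\xi\sim_n^c h_\alpha\eta .
\]
Hence the \(\sim_n^{c^\alpha}\)-classes are exactly the sets \(h_\alpha^{-1}(\cyl(v))\) for \(v\in T_n\). There are finitely many of them, and they are clopen because \(h_\alpha\) is a homeomorphism. Separation of points holds since \(h_\alpha\) is injective and \(c\) separates points.

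For \(G\)-invariance, use the twisting identity \(\alpha\cdot(g\xi)=\alpha(g)(\alpha\cdot\xi)\). This gives
\[
c^\alpha(g\xi,g\eta)=c\bigl(\alpha(g)h_\alpha\xi,\ \alpha(g)h_\alpha\eta\bigr)=c(h_\alpha\xi,h_\alpha\eta),
\]
where the last step uses the \(\Aut(T)\)-invariance of \(c\), since \(\alpha(g)\in G\le\Aut(T)\).

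For (ii), define \(\Phi_\alpha\) on level \(n\) by
\[
\Phi_\alpha\bigl([\xi]_{\sim_n^{c^\alpha}}\bigr)=v,
\]
where \(v\in T_n\) is the unique vertex with \(h_\alpha\xi\in\cyl(v)\). Equivalently, \(\Phi_\alpha\) is the composite of two maps. The first is the bijection on classes induced by \(h_\alpha\), sending \(\sim_n^{c^\alpha}\)-classes to \(\sim_n^c\)-classes. The second is the inverse of the bijection \(T_n\to\partial T/{\sim_n^c}\) from Corollary~\ref{cor:T=T_c}.

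Well-definedness and bijectivity on each level follow from the description of the classes in (i). Adjacency is given by containment of classes at consecutive levels, and \(h_\alpha\) preserves containment, so \(\Phi_\alpha\) is a rooted tree isomorphism.

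For equivariance, take \(g\in G\). Then
\[
\Phi_\alpha\bigl(g[\xi]\bigr)=\Phi_\alpha\bigl([g\xi]\bigr),
\]
which is the vertex whose cylinder contains \(h_\alpha(g\xi)=\alpha(g)h_\alpha\xi\). That vertex is \(\alpha(g)\cdot_T v=g\cdot_{\alpha T}v\), as required.

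The only real subtlety is at this last step. The equivariance hinges on the twisting identity from Theorem~\ref{thm:boundary-rigidity}, together with the fact that \(\alpha(g)\) genuinely acts on \(T\) and therefore maps \(\cyl(v)\) to \(\cyl(\alpha(g)v)\). I would double-check the direction of the convention: we need \(h_\alpha g=\alpha(g)h_\alpha\), not the inverse. With the convention as stated, this is exactly what makes \(\alpha T\) (rather than \(\alpha^{-1}T\)) the correct target.

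Everything else is routine pullback of structure along a homeomorphism.
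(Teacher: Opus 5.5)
Your proposal is correct and follows essentially the same route as the paper: you pull back \(c\) along the boundary homeomorphism \(\xi\mapsto\alpha\cdot\xi\), obtain \(G\)-invariance from the compatibility relation \(\alpha\cdot(g\xi)=\alpha(g)(\alpha\cdot\xi)\), and define \(\Phi_\alpha\) by sending \([\xi]_{\sim_n^{c^\alpha}}\) to the level-\(n\) class (vertex) of \(\alpha\cdot\xi\), with equivariance checked by the same computation. The only difference is that you explicitly compose with the identification \(T\cong T_c\) of Corollary~\ref{cor:T=T_c} so that \(\Phi_\alpha\) lands in vertices of \(\alpha T\), which the paper leaves implicit.
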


\begin{proof}
	Note that \( c^{\alpha}\) is the pullback of \(c\) via the boundary homeomorphism given by \( \alpha\), so is an admissible overlap function.

	To prove \(G\)-invariance, let \(g\in G\) and
	\(\xi,\eta\in\partial T\). Using the compatibility of the canonical
	boundary action with the automorphism \(\alpha\), we obtain
	\[
	\begin{aligned}
		c^\alpha(g\xi,g\eta)
		&=
		c\bigl(\alpha\cdot(g\xi),\alpha\cdot(g\eta)\bigr)\\
		&=
		c\bigl(\alpha(g)(\alpha\cdot\xi),
		\alpha(g)(\alpha\cdot\eta)\bigr)\\
		&=
		c(\alpha\cdot\xi,\alpha\cdot\eta)\\
		&=
		c^\alpha(\xi,\eta),
	\end{aligned}
	\]
	where the third equality follows from the \(G\)-invariance of \(c\).
	
	Recall that the vertices of \(T_{c^\alpha}\) at level \(n\) are the
	\(\sim_n^{c^{\alpha}}\)-classes. Define
	\[
	\Phi_\alpha\bigl([\xi]_{\sim_n^{c^{\alpha}}}\bigr)
	=
	[\alpha\cdot\xi]_{\sim_n^{c^{}}}.
	\]
	The definition of \(c^\alpha\) shows that this map is well defined and injective
	on every level. Since the action of \(\alpha\) on \(\partial T\) is a
	bijection, it is also surjective on every level. Moreover, it preserves
	levels and the containment of cylinder sets, and hence preserves the
	parent relation. Therefore \(\Phi_\alpha\) is an isomorphism of rooted
	trees.
	
	Finally, for \(g\in G\),
	\[
	\begin{aligned}
		\Phi_\alpha\bigl(g[\xi]_{\sim_n^{c^{\alpha}}}\bigr)
		&=
		\Phi_\alpha\bigl([g\xi]_{\sim_n^{c^{\alpha}}}\bigr)\\
		&=
		[\alpha\cdot(g\xi)]_{\sim_n^{c}}\\
		&=
		[\alpha(g)(\alpha\cdot\xi)]_{\sim_n^{c}}\\
		&=
		g\cdot_{\alpha T}[\alpha\cdot\xi]_{\sim_n^{c}}\\
		&=
		g\cdot_{\alpha T}
		\Phi_\alpha\bigl([\xi]_{\sim_n^{c^{\alpha}}}\bigr).
	\end{aligned}
	\]
	Thus \(\Phi_\alpha\) is \(G\)-equivariant.
\end{proof}

We are now ready to fully symmetrise our admissible overlap functions. This is our main theorem.

\begin{theorem}\label{thm:main}
	Let
	\[
	G\leq\Aut(T)
	\]
	be a finitely generated group acting weakly branch on a locally finite
	rooted tree \(T\), and let
	\[
	c\colon\partial T\times\partial T
	\longrightarrow \mathbb{N}\cup\{\infty\}
	\]
	be the overlap function associated to \(T\).
	
	For each \(\alpha\in\Aut(G)\), define
	\[
	c^\alpha(\xi,\eta)
	=
	c(\alpha\cdot\xi,\alpha\cdot\eta),
	\]
	where \(\Aut(G)\) acts on \(\partial T\) via the canonical boundary action
	of Theorem~\ref{thm:boundary-rigidity}. Define
	\[
	\widehat c(\xi,\eta)
	=
	\inf_{\alpha\in\Aut(G)}c^\alpha(\xi,\eta).
	\]
	
	Then the following hold.
	\begin{enumerate}[(i)]
		\item The function \(\widehat c\) is an admissible \(G\)-invariant
		overlap function on \(\partial T\). Moreover, it is invariant under
		the canonical action of \(\Aut(G)\) on \(\partial T\).
		
		Consequently, \(\widehat c\) determines a locally finite rooted tree,
		which we denote by
		\[
		\widehat T=T_{\widehat c}.
		\]
		
		\item There is a natural level-preserving surjective simplicial
		\(G\)-map
		\[
		p\colon\widehat T\longrightarrow T.
		\]
		Under the natural identification (given by Proposition~\ref{prop:reconstruction})
		\[
		\partial\widehat T\cong\partial T,
		\]
		the induced map on boundaries is the identity.
		
		\item The action of \(G\) on \(\widehat T\) is faithful and weakly
		branch. Moreover, the action of \(G\) on \(\widehat T\) is branch if
		and only if the action of \(G\) on \(T\) is branch.
		
		\item The canonical action of \(\Aut(G)\) on \(\partial T\) is induced
		by a faithful action of \(\Aut(G)\) on \(\widehat T\) by rooted tree
		automorphisms, and conjugation induces an isomorphism
		\[
		N_{\Aut(\widehat T)}(G)\cong\Aut(G).
		\]
		
		\item The group \(\Aut(G)\) is weakly branch in its action on
		\(\widehat T\). The action of \( \Inn(G)\) is simply the action of \( G\) under the isomorphism, \( G \cong \Inn(G)\).
	\end{enumerate}
\end{theorem}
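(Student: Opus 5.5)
The plan is to prove (i) first, since everything else follows fairly formally from Proposition~\ref{prop:reconstruction}, Lemma~\ref{lem:overlap leq} and Theorem~\ref{thm:boundary-rigidity}.

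\textbf{Step 1: $\widehat c$ is an overlap function with the right invariances.} Axioms (i) and (ii) of Definition~\ref{def:overlap} pass to the infimum directly. For (iii), fix $\alpha$ and use $c^\alpha(\xi,\zeta)\geq\min\{c^\alpha(\xi,\eta),c^\alpha(\eta,\zeta)\}\geq\min\{\widehat c(\xi,\eta),\widehat c(\eta,\zeta)\}$, then take the infimum over $\alpha$. Each $c^\alpha$ is $G$-invariant by Proposition~\ref{prop:twisted-overlap}, so $\widehat c$ is too. For $\Aut(G)$-invariance, note that $(c^\alpha)^\beta=c^{\alpha\beta}$, because the boundary action is an action; hence precomposing by $\beta$ merely reindexes the infimum.

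\textbf{Step 2: admissibility, the main obstacle.} Separation is immediate from $\widehat c\leq c$ and the fact that $c$ separates points. The difficulty is finiteness and clopenness of the classes. Unwinding the definitions, $\xi\sim_n^{\widehat c}\eta$ holds iff $\xi\sim_n^{c^\alpha}\eta$ for all $\alpha$, so the $\sim_n^{\widehat c}$-partition is the common refinement of the partitions $\mathcal P_\alpha=\partial T/\sim_n^{c^\alpha}$. Each $\mathcal P_\alpha$ is a $G$-invariant partition into exactly $|T_n|$ clopen sets: it is the $\alpha^{-1}$-image of the level-$n$ cylinder partition, and $G$-invariance follows from Step~1. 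Level-transitivity makes every $G$-orbit on $\partial T$ dense, so Lemma~\ref{lem:finiteness} together with finite generation shows that only finitely many distinct $\mathcal P_\alpha$ occur. A finite common refinement of finite clopen partitions is again a finite clopen partition. Local finiteness of $\widehat T$ then follows from Proposition~\ref{prop:reconstruction}.

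\textbf{Step 3: parts (ii)–(v).} For (ii), apply Lemma~\ref{lem:overlap leq} with $\widehat c\leq c=c^{\mathrm{id}}$, and compose with the isomorphism $T_c\cong T$ of Corollary~\ref{cor:T=T_c}. Parts (ii) and (iii) of that lemma give equivariance and show that the boundary map is the identity under $\pi$.

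For (iii), faithfulness holds because $G$ acts faithfully on $\partial T\cong\partial\widehat T$. Level-transitivity comes from density of orbits: a level-$n$ class $C$ of $\widehat c$ is clopen and nonempty, and each $\mathcal P_\alpha$ is $G$-transitive. More directly, the argument of Lemma~\ref{lem:finiteness} shows that every $G$-invariant clopen partition has a transitive $G$-action on its parts, since the orbit of any point meets every part. For the weakly branch property, take a vertex $\widehat v$ at level $n$, viewed as a nonempty clopen set $C\subseteq\partial T$. This set $C$ contains some cylinder $\cyl(w)$ of $T$, and $\Rist_G(w)\neq1$ is supported in $\cyl(w)\subseteq C$, hence fixes everything outside $\widehat T_{\widehat v}$ (using Corollary~\ref{cor: injective to boundary}).

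For the branch equivalence, suppose first that $G$ is branch on $T$. Fix $n$; the clopen classes of $\widehat c$ at level $n$ are finite unions of cylinders of $T$, so there is a level $m$ such that each $\cyl(w)$ with $w\in T_m$ lies in a single class. Then $\prod_{w\in T_m}\Rist_G(w)$ has finite index and lies in $\Rist^{\widehat T}_G(n)$. Conversely, if $G$ is branch on $\widehat T$, use the surjection $p$. Each $\widehat T$-class at level $n$ lies inside a $T$-cylinder at level $n$, so $\Rist^{\widehat T}_G(\widehat v)\leq\Rist_G^T(p(\widehat v))$, and finite index transfers.

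For (iv), $\Aut(\widehat T)\to\Homeo(\partial\widehat T)$ is injective by Corollary~\ref{cor: injective to boundary}. Each $\alpha$ preserves $\widehat c$ by Step~1, so by the reconstruction (applied with the group $\Aut(G)$) it acts on $\widehat T$ inducing the canonical boundary action; this action is faithful because the boundary action is. Hence $N_{\Aut(\widehat T)}(G)$ embeds in $N_{\Homeo(\partial T)}(G)\cong\Aut(G)$, and the image is everything.

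For (v), $\Aut(G)\geq G$ is level-transitive and has nontrivial rigid stabilisers by (iii). Finally, the element of $N_{\Homeo}(G)$ inducing the inner automorphism by $g$ is $g$ itself, by uniqueness (trivial centraliser), so the action of $\Inn(G)$ is the action of $G$.
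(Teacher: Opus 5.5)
Your proposal is correct and follows essentially the same route as the paper. The key admissibility step is the same: Lemma~\ref{lem:finiteness} leaves only finitely many distinct $G$-invariant clopen partitions $\mathcal P_n^\alpha$, and parts (ii)--(v) come from Lemma~\ref{lem:overlap leq}, cylinder containment and Theorem~\ref{thm:boundary-rigidity} exactly as in the paper. The remaining differences are cosmetic: you embed $N_{\Aut(\widehat T)}(G)$ into $N_{\Homeo(\partial T)}(G)$ rather than computing the kernel of the conjugation map, and you justify the $\Inn(G)$ identification via the trivial centraliser, which is slightly more careful than the paper.
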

\begin{proof}
	We first argue that \( \widehat{c}\) is an overlap function. Note that conditions (i) and (ii) of Definition~\ref{def:overlap} are clear. For condition (iii), we note that for  every $\alpha$,
	\[
	c^{\alpha}(\xi,\zeta)\geq\min\{c^{\alpha}(\xi,\eta),c^{\alpha}(\eta,\zeta)\},
	\]
	since each \( c^{\alpha}\) is an overlap function. 
	Taking infima gives
	\begin{align*}
		\widehat{c}(\xi,\zeta)
		&\geq \inf_{\alpha}\min\{c^{\alpha}(\xi,\eta),c^{\alpha}(\eta,\zeta)\}\\
		&\geq \min\left\{\inf_{\alpha} c^{\alpha}(\xi,\eta),\inf_{\alpha} c^{\alpha}(\eta,\zeta)\right\}\\
		&=\min\{\widehat{c}(\xi,\eta),\widehat{c}(\eta,\zeta)\}.
	\end{align*}

	Hence \( \widehat{c} \) is an overlap function.

	For \(n\geq 0\) and \(\alpha\in\Aut(G)\), let
	\[
	\mathcal{P}_n^\alpha
	\]
	denote the partition of \(\partial T\) into the equivalence classes of
	the relation \( \sim_n^{c^{\alpha}}\). 
	By Proposition~\ref{prop:twisted-overlap}, each \(c^\alpha\) is an
	admissible \(G\)-invariant overlap function, and
	\[
	T_{c^\alpha}\cong_G\alpha T.
	\]
	In particular, if \(T\) has \(k_n\) vertices at level \(n\), then every
	partition \(\mathcal{P}_n^\alpha\) has exactly \(k_n\) parts.
	
	By Lemma~\ref{lem:finiteness}, for each fixed \(n\), only finitely many
	distinct partitions \(\mathcal{P}_n^\alpha\) occur as
	\(\alpha\) ranges over \(\Aut(G)\). Moreover,
	\[
	\begin{aligned}
		\widehat c(\xi,\eta)\geq n
		&\quad\Longleftrightarrow\quad
		c^\alpha(\xi,\eta)\geq n
		\text{ for every }\alpha\in\Aut(G).
	\end{aligned}
	\]
	Thus the equivalence relation associated to \(\widehat c\) at level
	\(n\) is the intersection of finitely many equivalence relations
	associated to admissible overlap functions. Its classes are therefore
	the nonempty intersections of finitely many clopen sets, and hence form
	a finite clopen partition of \(\partial T\).


	Since the identity automorphism belongs to
	\(\Aut(G)\), we have
	\[
	\widehat c\leq c.
	\]
	Consequently, if \(\xi\neq\eta\), then
	\[
	\widehat c(\xi,\eta)\leq c(\xi,\eta)<\infty.
	\]
	It follows that \(\widehat c\) is an admissible overlap function.
	
	Each \(c^\alpha\) is \(G\)-invariant, so \(\widehat c\) is also
	\(G\)-invariant. Furthermore, for every \(\beta\in\Aut(G)\),
	\[
	\begin{aligned}
		\widehat c(\beta\cdot\xi,\beta\cdot\eta)
		&=
		\inf_{\alpha\in\Aut(G)}
		c\bigl(\alpha\cdot(\beta\cdot\xi),
		\alpha\cdot(\beta\cdot\eta)\bigr)\\
		&=
		\inf_{\alpha\in\Aut(G)}
		c\bigl((\alpha\beta)\cdot\xi,
		(\alpha\beta)\cdot\eta\bigr)\\
		&=
		\widehat c(\xi,\eta),
	\end{aligned}
	\]
	since right multiplication by \(\beta\) permutes \(\Aut(G)\). This proves
	the first assertion and allows us to define
	\[
	\widehat T=T_{\widehat c}.
	\]
	
	Since \(\widehat c\leq c\), Lemma~\ref{lem:overlap leq} and Corollary~\ref{cor:T=T_c} give a natural
	level-preserving surjective simplicial map
	\[
	p\colon\widehat T\longrightarrow T.
	\]
	Both \(\widehat c\) and \(c\) are \(G\)-invariant, so \(p\) is
	\(G\)-equivariant. 
	
	By Lemma~\ref{lem:overlap leq}, \( \partial p\) is simply \( \pi_{\widehat{c}}^{-1}\). Or, in other words, 
 under the natural identification
	\[
	\partial\widehat T\cong\partial T,
	\]
	the induced boundary map is the identity. This proves the second assertion.
	
	For the third assertion, we note that the existence of the map \( p\) shows that the action of \( G\) on \( \widehat{T}\) is faithful, since the action on \( T\) is faithful.

	We next show that the action of \(G\) on \(\widehat T\) is weakly
	branch. Since the action of \(G\) on \(T\) is level-transitive, its
	action on \(\partial T\) is minimal: every \(G\)-orbit meets every
	cylinder and is therefore dense. Each level of \(\widehat T\) is a
	finite \(G\)-invariant clopen partition of \(\partial T\). Minimality
	therefore implies that \(G\) acts transitively on every level of
	\(\widehat T\).
	
	Let \(\widehat v\) be a vertex of \(\widehat T\), and let
	\(\cyl({\widehat v}) \subseteq\partial T\) be the corresponding cylinder.
	Since \(\cyl({\widehat v})\) is open and the cylinders arising from \(T\)
	form a basis for the topology on \(\partial T\), there is a vertex
	\(w\) of \(T\) such that
	\[
	\cyl(w) \subseteq \cyl({\widehat v}) .
	\]
	As the action of \(G\) on \(T\) is weakly branch,
	\(\Rist_G^T(w)\neq 1\). Every element of \(\Rist_G^T(w)\) induces a homeomorphism of \( \partial T\) that is supported
	inside \( \cyl(w)\), and hence inside \( \cyl({\widehat v}) \). Therefore
	\[
	\Rist_G^T(w)\leq\Rist_G^{\widehat T}(\widehat v),
	\]
	so
	\[
	\Rist_G^{\widehat T}(\widehat v)\neq 1.
	\]
	Thus \(G\) acts weakly branch on \(\widehat T\).

	We now compare the branch properties of the two actions. For a rooted
	tree \(S\), write
	\[
	\Rist_G^S(n)
	=
	\prod_{v\in S_n}\Rist_G^S(v)
	\]
	for the rigid stabiliser of level \(n\).
	
	Since the level-\(n\) partition associated to \(\widehat c\) refines
	the level-\(n\) partition associated to \(c\), every cylinder at level
	\(n\) of \(\widehat T\) is contained in a cylinder at level \(n\) of
	\(T\). Hence
	\[
	\Rist_G^{\widehat T}(n)
	\leq
	\Rist_G^T(n)
	\]
	for every \(n\). It follows that if the action on \(\widehat T\) is
	branch, then the action on \(T\) is branch.
	
	Conversely, fix \(n\geq 0\). Each level-\(n\) cylinder of
	\(\widehat T\) is clopen in \(\partial T\), hence both compact and open. It is therefore  a finite union
	of cylinders arising from \(T\), since these form a basis for the topology. Since there are only finitely many
	vertices at level \(n\) of \(\widehat T\), there is some \(m\) such that
	every level-\(m\) cylinder of \(T\) is contained in a level-\(n\)
	cylinder of \(\widehat T\). Therefore
	\[
	\Rist_G^T(m)
	\leq
	\Rist_G^{\widehat T}(n).
	\]
	If the action of \(G\) on \(T\) is branch, then
	\(\Rist_G^T(m)\) has finite index in \(G\), and hence so does
	\(\Rist_G^{\widehat T}(n)\). Thus the action on \(\widehat T\) is
	branch. We have proved that the action on \(\widehat T\) is branch if
	and only if the action on \(T\) is branch. This proves the third assertion. 
	
	Since \(\widehat c\) is invariant under the canonical action of
	\(\Aut(G)\) on \(\partial T\), \(\Aut(G)\) acts on \(\widehat T\) by rooted automorphisms, by Proposition~\ref{prop:reconstruction}. This action is faithful by Corollary~\ref{cor: injective to boundary}, since
	 its induced action on
	\(\partial\widehat T\cong\partial T\) is the faithful canonical
	boundary action.  More precisely, Proposition~\ref{prop:reconstruction} shows that \( \pi_{\widehat{c}}: \partial T \to \partial \widehat{T}\) is an \( \Aut(G)\)-equivariant homeomorphism. 
	
	The compatibility relation
	\[
	\alpha\cdot(g\xi)
	=
	\alpha(g)(\alpha\cdot\xi)
	\]
	shows that this action normalises \(G\), and that conjugation by the tree
	automorphism induced by \(\alpha\) restricts to the automorphism
	\(\alpha\) of \(G\). Hence conjugation gives a surjective homomorphism
	\[
	N_{\Aut(\widehat T)}(G)\longrightarrow\Aut(G).
	\]
	
	Its kernel is the centraliser of \(G\) in \(\Aut(\widehat T)\). Every
	element of this centraliser induces a homeomorphism of
	\(\partial\widehat T\cong\partial T\) which centralises \(G\). By
	Theorem~\ref{thm:boundary-rigidity}, the centraliser of \(G\) in
	\(\Homeo(\partial T)\) is trivial. The kernel is therefore trivial, and
	so
	\[
	N_{\Aut(\widehat T)}(G)\cong\Aut(G).
	\]
	
	This proves the fourth point. 
	
	Finally, for the final point, \(\Aut(G)\) contains \(G\) in its action on \(\widehat T\), via the identification of \( G\) with \( \Inn(G)\). The compatibility relation above immediately implies that the action of \( \Inn(G)\) is the same as that of \( G\), and this is also apparent from the normaliser equation.

	Since \(G\) acts transitively on every level of \(\widehat T\),
	so does \(\Aut(G)\). Moreover, for every vertex \(\widehat v\),
	\[
	1\neq\Rist_G^{\widehat T}(\widehat v) \cong \Rist_{\Inn(G)}^{\widehat T}(\widehat v)
	\leq
	\Rist_{\Aut(G)}^{\widehat T}(\widehat v).
	\]
	Thus \(\Aut(G)\) acts weakly branch on \(\widehat T\).
\end{proof}

\begin{remark}
The overlap function $\widehat c$ is characterised intrinsically as the greatest $\Aut(G)$-invariant overlap function dominated by $c$. Indeed, if $d$ is $\Aut(G)$-invariant and $d\leq c$, then
\[
  d\leq c^\alpha
\]
for every $\alpha$, and therefore $d\leq\widehat c$.
\end{remark}

We note that commensurability of branch groups has been studied in \cite{Garridowilson} and not all finite index subgroups need be (weakly) branch. For instance, if \( G\) is branch and \( |T_n| > 1\) then \( \Rist_G(n)\) is a finite index subgroup admitting normal subgroups which intersect trivially, which implies that  \( \Rist_G(n)\) is not branch by \cite[Theorem 1.1 and Lemma 2.2(a)]{Garridowilson}.

As an application of our main theorem, we study finite extensions of (weakly) branch groups and determine when they are (weakly) branch. 

\begin{theorem}\label{thm:finite-extensions}
	Let \(G\) be a finitely generated weakly branch group, and let
	\[
	1\longrightarrow G\longrightarrow\Gamma
	\overset{q}{\longrightarrow} F\longrightarrow 1
	\]
	be a short exact sequence, where \(F\) is finite. Let
	\[
	\mu\colon F\longrightarrow\Out(G)
	\]
	be the monodromy homomorphism induced by conjugation in \(\Gamma\), and let
	\[
	\pi\colon\Aut(G)\longrightarrow\Out(G)
	\]
	be the natural quotient map.
	
	Conjugation induces a surjective homomorphism
	\[
	\rho\colon\Gamma\longrightarrow
	\pi^{-1}\bigl(\mu(F)\bigr)\leq\Aut(G)
	\]
	whose kernel is naturally isomorphic to \(\ker(\mu)\).
	Consequently, the following are equivalent:
	\begin{enumerate}[(i)]
		\item The monodromy homomorphism \(\mu\) is injective.
		\item The homomorphism \(\rho\) is an isomorphism.
		\item The group \(\Gamma\) is weakly branch.
	\end{enumerate}
	
	If \(G\) is branch, these conditions are also equivalent to:
	\begin{enumerate}[(i)]
		\setcounter{enumi}{3}
		\item The group \(\Gamma\) is branch.
	\end{enumerate}
	
	Whenever these conditions hold, every faithful weakly branch action
	\(G\curvearrowright T\) gives rise to a weakly branch action
	\(\Gamma\curvearrowright\widehat T\), where \(\widehat T\) is the tree
	supplied by Theorem~\ref{thm:main}. In fact \(G\curvearrowright T\) is
	branch if and only if \(\Gamma\curvearrowright\widehat T\) is branch.
\end{theorem}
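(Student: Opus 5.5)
Define $\rho(\gamma)=(g\mapsto\gamma g\gamma^{-1})$, which makes sense because $G$ is normal in $\Gamma$. Then $\rho|_G$ is the isomorphism $G\cong\Inn(G)$ given by Lemma~\ref{lem:centreless}. By definition of the monodromy, $\pi\circ\rho=\mu\circ q$, so the image of $\rho$ lies in $\pi^{-1}(\mu(F))$. Surjectivity follows from this: given $\alpha$ with $\pi(\alpha)=\mu(f)$, pick $\gamma\in q^{-1}(f)$. Then $\alpha\rho(\gamma)^{-1}\in\Inn(G)=\rho(G)$, so $\alpha\in\rho(\Gamma)$.

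For the kernel, $\ker\rho=C_\Gamma(G)$. Since $G$ is centreless, $C_\Gamma(G)\cap G=1$, so $q$ restricted to $C_\Gamma(G)$ is injective. Its image lies in $\ker\mu$. Conversely, if $\mu(f)=1$ and $q(\gamma)=f$, then conjugation by $\gamma$ is inner, say equal to conjugation by $h\in G$, and $h^{-1}\gamma\in C_\Gamma(G)$ maps to $f$. Hence $\ker\rho\cong\ker\mu$, which gives (i)$\Leftrightarrow$(ii).

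For (ii)$\Rightarrow$(iii), fix a faithful weakly branch action $G\curvearrowright T$. By Theorem~\ref{thm:main}, $\Aut(G)$ acts faithfully on $\widehat T$ extending the $G$-action. Restricting to $\pi^{-1}(\mu(F))\supseteq\Inn(G)$ and pulling back along $\rho$ gives a faithful action of $\Gamma$. It is level-transitive because $G$ already is. Rigid stabilisers are nontrivial because they contain $\Rist_G^{\widehat T}(\widehat v)\neq1$. The same argument gives the final paragraph of the statement.

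For branch: $\Rist_\Gamma^{\widehat T}(n)\supseteq\Rist_G^{\widehat T}(n)$, which has finite index in $G$ and hence in $\Gamma$ whenever $G\curvearrowright T$ is branch, by Theorem~\ref{thm:main}(iii). For the converse, if $\Gamma\curvearrowright\widehat T$ is branch, I would pass to $G$: since $[\Gamma:G]<\infty$, $G$ is level-transitive on deep enough levels, but not necessarily on all of them, so this is subtle. The cleaner route is that $G$ on $\widehat T$ is branch iff $G$ on $T$ is branch, so it suffices to show $G\curvearrowright\widehat T$ is branch. For that I would use that $\Rist_\Gamma(v)\cap G$ has finite index in $\Rist_\Gamma(v)$ and is contained in $\Rist_G(v)$, so $\Rist_G^{\widehat T}(n)$ has finite index in $\Gamma$. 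This also gives (ii)$\Rightarrow$(iv) when $G$ is branch.

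The main obstacle is (iii)$\Rightarrow$(i), and (iv)$\Rightarrow$(i). Suppose $\Gamma$ is weakly branch on some tree. Then $\ker\rho=C_\Gamma(G)$ is a normal subgroup of $\Gamma$, and it is finite since it embeds in $F$. Lemma~\ref{lem:no finite normal subs}, applied to $\Gamma$, forces $C_\Gamma(G)=1$, so $\rho$ is injective and (ii) holds. Branch actions are weakly branch for infinite groups, so (iv)$\Rightarrow$(iii). The delicate point is only that the lemma is applied to $\Gamma$ itself rather than to $G$, which is legitimate since the lemma is stated for any weakly branch group.
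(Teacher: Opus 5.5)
Your proof is correct and follows essentially the same route as the paper. The shared steps are: the conjugation map $\rho$; the identification of $\ker\rho=C_\Gamma(G)$ with $\ker\mu$ via $q$; the action of $\Gamma\cong\rho(\Gamma)\leq\Aut(G)$ on $\widehat T$ for (ii)$\Rightarrow$(iii); Lemma~\ref{lem:no finite normal subs} applied to $\Gamma$ for (iii)$\Rightarrow$(i); and the finite-index comparison $\Rist_G(v)=\Rist_\Gamma(v)\cap G$ level by level for the branch statements. Your aside worrying that $G$ might be level-transitive only on deep levels of $\widehat T$ is unfounded, since Theorem~\ref{thm:main}(iii) gives level-transitivity of $G$ on every level of $\widehat T$. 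Your second route already relies on exactly that fact, so nothing is lost.
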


\begin{proof}
	Since \(G\) is weakly branch, it is centreless by Lemma~\ref{lem:centreless}. Conjugation therefore
	defines a homomorphism
	\[
	\rho\colon\Gamma\longrightarrow\Aut(G)
	\]
	whose restriction to \(G\) identifies \(G\) with \(\Inn(G)\). Moreover, \( \pi\bigl(\rho(\Gamma)\bigr)=\mu(F)\). 
	It follows that
	\[
	\rho(\Gamma)=\pi^{-1}\bigl(\mu(F)\bigr).
	\]
	
	Now let
	\(
	C=\ker(\rho)=C_\Gamma(G).
	\)
	Since
	\(
	C\cap G=Z(G)=1,
	\)
	the restriction of \(q\) to \(C\) is injective. Its image is precisely
	\(\ker(\mu)\). Indeed, every element of \(C\) induces the trivial outer
	automorphism of \(G\). Conversely, if \(\gamma G\in\ker(\mu)\), then
	conjugation by \(\gamma\) on \(G\) is conjugation by some \(g\in G\),
	and hence \(g^{-1}\gamma\in C\). Thus
	\[
	C\cong q(C) = \ker(\mu).
	\]
	In particular, \(\mu\) is injective if and only if \(\rho\) is an
	isomorphism. This proves the equivalence of (i) and (ii).
	
	Suppose that these conditions hold. Choose a faithful weakly branch
	action \(G\curvearrowright T\), and let \(\widehat T\) be supplied by
	Theorem~\ref{thm:main}. Via \(\rho\), the group \(\Gamma\) is a subgroup
	of \(\Aut(G)\) containing \(G\), and hence acts on \(\widehat T\).
	Since \(G\) is level-transitive, so is \(\Gamma\), and for every vertex
	\(v\) of \(\widehat T\),
	\[
	1\neq\Rist_G(v)\leq\Rist_\Gamma(v).
	\]
	Thus \(\Gamma\) acts weakly branch on \(\widehat T\), proving (iii).
	
	Conversely, suppose that \(\Gamma\) is weakly branch. The subgroup
	\[
	C\cong\ker(\mu)
	\]
	is finite and normal in \(\Gamma\). Since a weakly branch group has no
	non-trivial finite normal subgroups by Lemma~\ref{lem:no finite normal subs}, \(C=1\), and hence \(\mu\) is
	injective. This proves the equivalence of (i)--(iii).

Finally, suppose that the equivalent conditions hold. By
Theorem~\ref{thm:main}, the action \(G\curvearrowright T\) is branch if
and only if \(G\curvearrowright\widehat T\) is branch.

For every vertex \(v\) of \(\widehat T\),
\[
\Rist_G(v)=\Rist_\Gamma(v)\cap G,
\]
which has finite index in \(\Rist_\Gamma(v)\), since \(G\) has finite
index in \(\Gamma\). As every level is finite, it follows that
\(\Rist_G(n)\) has finite index in \(\Rist_\Gamma(n)\). This implies that
\(\Rist_G(n)\) has finite index in \(G\) if and only if
\(\Rist_\Gamma(n)\) has finite index in \(\Gamma\). Hence
\(G\curvearrowright T\) is branch if and only if
\(\Gamma\curvearrowright\widehat T\) is branch, proving the final
assertion and the equivalence with~(iv).

\end{proof}

\section{A branch action which is not automorphism-invariant}
\label{sec:example}
We give an example in which the automorphism-invariant refinement is
strict.

Let \(H\) be a finitely generated group acting faithfully and branch on
a locally finite rooted tree \(T\), and let
\[
c\colon\partial T\times\partial T\longrightarrow\mathbb N\cup\{\infty\}
\]
be the associated overlap function. Let
\[
X=C_2^2
\]
and choose a subgroup \(U<X\) of order \(2\). Set
\[
W=H\wr X=H^X\rtimes X,
\]
where \(X\) acts regularly on itself and hence permutes the factors of
\(H^X\). We write \(H_x\) for the copy of \(H\) indexed by \(x\in X\).

Consider the compact totally disconnected space (four copies of \( \partial T\))
\[
Y=X\times\partial T
\]
and define
\[
c_U\colon Y\times Y\longrightarrow\mathbb N\cup\{\infty\}
\]
by
\[
c_U\bigl((x,\xi),(y,\eta)\bigr)=
\begin{cases}
	0,              & x-y\notin U,\\
	1,              & x-y\in U\setminus\{0\},\\
	2+c(\xi,\eta),  & x=y.
\end{cases}
\]

The corresponding level equivalence relations are particularly simple.
At level \(1\),
\[
(x,\xi)\sim_1(y,\eta)
\quad\Longleftrightarrow\quad
x+U=y+U,
\]
while at level \(2\),
\[
(x,\xi)\sim_2(y,\eta)
\quad\Longleftrightarrow\quad
x=y.
\]
More generally, for \(n\geq0\),
\[
(x,\xi)\sim_{n+2}(y,\eta)
\quad\Longleftrightarrow\quad
x=y\ \text{and}\ c(\xi,\eta)\geq n.
\]
Thus \(c_U\) is an admissible overlap function. Its associated rooted
tree, denoted by \(T_U\), has the form
\[
\{*\}\longrightarrow X/U\longrightarrow X,
\]
with a copy of \(T\) attached below each \(x\in X\).

The wreath product \(W\) acts on \(Y\) in the usual way. The base group
acts coordinatewise:
\[
(h_z)_{z\in X}\cdot(x,\xi)=(x,h_x\xi),
\]
and the top group acts by translations:
\[
q\cdot(x,\xi)=(q+x,\xi).
\]
The function \(c_U\) is invariant under these actions, so \(W\) acts
faithfully on \(T_U\).

This action is level-transitive. Moreover,
\[
H^X\leq\Rist_W(1)
\]
and, for every \(n\geq0\),
\[
\Rist_H(n)^X\leq\Rist_W(n+2).
\]
Indeed, the product of the groups \(H_x\) over \(x\) in a fixed coset of
\(U\) is supported inside the corresponding first-level cylinder, while
the latter inclusions follow from the copies of the rigid stabilisers of
\(H\) inside the subtrees below the vertices \(x\in X\). Since \(H\) is
branch,
\[
[W:H^X]=|X|<\infty
\]
and
\[
[W:\Rist_H(n)^X]
=|X|\,[H:\Rist_H(n)]^{|X|}<\infty.
\]
Consequently, \(W\curvearrowright T_U\) is branch.

We now show that this action is not invariant under all automorphisms of
\(W\). Let \(V<X\) be another subgroup of order \(2\). Since
\[
\Aut(X)=\GL_2(\mathbb F_2)\cong S_3
\]
acts transitively on the subgroups of order \(2\), there exists
\(A\in\Aut(X)\) such that
\[
A(V)=U.
\]
The homeomorphism
\[
\Phi_A\colon Y\longrightarrow Y,
\qquad
\Phi_A(x,\xi)=(Ax,\xi),
\]
normalises \(W\): it sends \(q\in X\) to \(Aq\) and relabels the base
factor \(H_x\) as \(H_{Ax}\). Thus conjugation by \(\Phi_A\) induces an
automorphism
\[
\alpha_A\in\Aut(W).
\]

For this automorphism,
\[
\begin{aligned}
	(c_U)^{\alpha_A}\bigl((x,\xi),(y,\eta)\bigr)
	&=c_U\bigl((Ax,\xi),(Ay,\eta)\bigr)\\
	&=c_V\bigl((x,\xi),(y,\eta)\bigr),
\end{aligned}
\]
where the second equality follows from \(A(V)=U\). Thus every partition
of \(X\) into cosets of a subgroup of order \(2\) occurs as the
level-one partition of an automorphic twist of \(c_U\).

The three subgroups of order \(2\) in \(X=C_2^2\) determine the three
partitions of \(X\) into two pairs, whose common refinement is the
partition into singletons. Consequently, the automorphism-invariant
refinement \(\widehat T_U\) has at least four vertices at its first
level, whereas \(T_U\) has only two. In particular,
\[
\widehat c_U\neq c_U
\qquad\text{and}\qquad
\widehat T_U\neq T_U.
\]

\section{Finite branch extensions of the first Grigorchuk group}
\label{sec:grigorchuk-extensions}

 The result of \cite{GrigorchukSidki2004} realises the automorphisms of the first Grigorchuk group as automorphisms of the tree, so the discussion below is already implicit in the literature. In terms of our construction, \( T = \widehat{T}\) in this case. However, it is an illustration of what one can do and, in principle, applies to any branch group in the following way.
 
 Given a finitely generated branch group \( G\), its finite branch extensions are obtained from finite subgroups of \( \Out(G)\) which are then pulled back to \( \Aut(G)\). The isomorphism types of these finite extensions are given by conjugacy in the commensurator group of \( G\), as below. We use Grigorchuk's group to illustrate the method, as all the ingredients are already known.

 For further results on the automorphisms and automorphism tower of the
 first Grigorchuk group, see~\cite{BartholdiSidki}. Related abstract
 commensurability properties were established in
 \cite{GrigorchukWilsonCommensurability}.
 
 \medspace
 
 For now, we establish some elementary facts and notation.  Recall that, given a group \( G\), the commensurator group \( \Comm(G)\) is the group of all isomorphisms between finite index subgroups of \(G\), where two elements are considered equal if they agree on a finite index subgroup of \( G\).
 
 We then have, 
   
 \begin{lemma}
 	\label{lem:aut into comm}
 	Let \( G\) be a weakly branch group. Then the natural map \( \Aut(G)\) embeds into \( \Comm(G)\). Equivalently, the only automorphism of \( G\) which is the identity on a finite index subgroup of \(G\) is the identity map.
 \end{lemma}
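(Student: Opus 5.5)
The kernel of $\Aut(G)\to\Comm(G)$ consists of the automorphisms $\alpha$ that restrict to the identity on some finite index subgroup $K\leq G$. So the plan is to take such an $\alpha$ and show $\alpha=1$. First we may shrink $K$ to a finite index subgroup that is normal in $G$, by replacing it with its normal core $\bigcap_{g} gKg^{-1}$ (still of finite index); $\alpha$ is still the identity on it. So assume $K\trianglelefteq G$ with $[G:K]<\infty$ and $\alpha|_K=\mathrm{id}$.

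For $g\in G$ and $k\in K$, the element $gkg^{-1}$ lies in $K$. Applying $\alpha$ gives
\[
gkg^{-1}=\alpha(gkg^{-1})=\alpha(g)\,k\,\alpha(g)^{-1}.
\]
Hence $z_g:=g^{-1}\alpha(g)$ centralises $K$. The main step is therefore to show that $C_G(K)=1$ for every finite index (normal) subgroup $K$ of a weakly branch group. Once this holds, $\alpha(g)=g$ for all $g$ and we are done.

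To prove $C_G(K)=1$, observe that $C_G(K)$ is normal in $G$ because $K$ is normal. Suppose $1\neq z\in C_G(K)$. Then $z$ moves some vertex $v$, so $zv\neq v$. Since $\Rist_G(v)$ is infinite (Definition~\ref{def:weakly-branch}) and $K$ has finite index, $K\cap\Rist_G(v)$ has finite index in $\Rist_G(v)$, so it contains some $r\neq 1$. Then
\[
r=zrz^{-1}\in\Rist_G(zv),
\]
so $r\in\Rist_G(v)\cap\Rist_G(zv)$. Replacing $v$ by a descendant if necessary, we may take $v$ and $zv$ at the same level with disjoint subtrees, since $z$ moving $v$ already forces this. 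The two rigid stabilisers have disjoint supports, so their intersection is trivial, and hence $r=1$, a contradiction. This mirrors the proof of Lemma~\ref{lem:centreless}.

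The only delicate point is ensuring that $K\cap\Rist_G(v)$ is non-trivial. This is where we use the infinite form of the weakly branch condition rather than merely $\Rist_G(v)\neq1$; a finite index subgroup meets any infinite subgroup non-trivially. With that in hand, the argument is short.
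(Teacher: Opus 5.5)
Your proof is correct, but it follows a different route from the paper's. The paper argues on the boundary. It uses the Lavreniuk--Nekrashevych theorem (Theorem~\ref{thm:boundary-rigidity}) to make $\alpha$ act as a homeomorphism of $\partial T$. Via the compatibility relation, $\alpha$ then commutes with every element of the finite-index subgroup $H$ it fixes, and hence preserves their supports. If $\alpha$ moved a point, the paper picks an open $U$ with $\alpha U\cap U=\emptyset$, a cylinder $\cyl(v)\subseteq U$, and some $1\neq h\in H\cap\Rist_G(v)$; the nonempty support of $h$ would then lie in $U\cap\alpha U$, a contradiction.

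You instead stay inside $G$. The identity $\alpha(g)k\alpha(g)^{-1}=gkg^{-1}$ on the normal core places $g^{-1}\alpha(g)$ in $C_G(K)$, and you show $C_G(K)=1$ with the rigid-stabiliser argument of Lemma~\ref{lem:centreless}. Both proofs hinge on the same fact: a finite-index subgroup meets every (infinite) rigid stabiliser nontrivially. Yours uses only the faithful weakly branch action of $G$ on $T$ and does not need the boundary rigidity theorem, so it is more elementary and self-contained. The paper's argument only uses that $\alpha$ is a homeomorphism of $\partial T$ commuting with $H$. So it in fact shows that any finite-index subgroup of $G$ has trivial centraliser in $\Homeo(\partial T)$, which contains your $C_G(K)=1$ as a special case and fits the paper's boundary framework.

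Your remark about replacing $v$ by a descendant is unnecessary. Since $z$ fixes the root, $v$ and $zv$ are automatically distinct vertices on the same level.
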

 \begin{proof}
 	Let \( T\) be the locally finite tree on which \( G\) acts in a weakly branch way. By Theorem~\ref{thm:boundary-rigidity}, \( \Aut(G) \)  has a canonical faithful action on \( \partial T\), satisfying the compatibility relation above.

 	Now suppose that \( \alpha \in \Aut(G)\) acts as the identity on some finite index subgroup, \( H\). Further suppose that, for some \( \xi \neq \eta \in \partial T\), we have \( \alpha \xi = \eta\). Then, as \( \partial T \) is Hausdorff, there exists an open \( U \subseteq \partial T\) such that \( \alpha U \cap U = \emptyset\).
 	
 	Now \( U \) contains some cylinder \( \cyl(v)\), and we note that all the elements of \( \Rist_G(v) \) act on the boundary with support contained in \( \cyl(v)\). As \( G\) is weakly branch, \( \Rist_G(v)\) is infinite and hence \( H \cap \Rist_G(v)\) is non-trivial since \( H\) has finite index. 
 	
 	Therefore there is some \( 1 \neq h \in H\) whose support, \( \text{supp}_{\partial T}(h)\), on the boundary is contained in \( U\) and non-empty. 
 	
 	Now, compatibility and the fact that \( \alpha\) fixes \( h\) imply that
 	\[
 	\alpha \cdot (h \eta ) = h( \alpha \cdot \eta) \text{ for all }  \eta \in \partial T.
 	\]
 	In particular, 
 	\[
 	\alpha(U) \supseteq \alpha(\text{supp}_{\partial T}(h) ) = \text{supp}_{\partial T}(h) \subseteq U.
 	\]

 	Since \( \text{supp}_{\partial T}(h) \neq \emptyset \), this is a contradiction which shows that \( \alpha\) must act as the identity on the boundary and hence \( \alpha\) is the identity as the action is faithful.

 \end{proof}

 We then have
 
 \begin{proposition}
 	Let \( G\) be a centreless group in which \( \Aut(G)\) embeds into \( \Comm(G)\) under the natural map. Let \( E, F \leq \Aut(G)\) be subgroups of the automorphism group of \( G\), both of which contain \( \Inn(G)\) as a (normal) subgroup of finite index.

 	Then \( E\) and \( F\) are abstractly isomorphic if and only if they are conjugate as subgroups of \( \Comm(G)\). 
 \end{proposition}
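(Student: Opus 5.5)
We need to prove: if $E\cong F$ abstractly, then $E$ and $F$ are conjugate in $\Comm(G)$; the converse is immediate, since conjugation in $\Comm(G)$ is an isomorphism of subgroups. The plan is to show that an abstract isomorphism $\phi\colon E\to F$ is itself conjugation by an element of $\Comm(G)$.

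\textbf{Setup.} Identify $G$ with $\Inn(G)$, which is possible because $G$ is centreless. Both $E$ and $F$ then contain $G$ with finite index. Fix an isomorphism $\phi\colon E\to F$.

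\textbf{Step 1: $\phi$ defines an element of $\Comm(G)$.} Set
\[
H=G\cap\phi^{-1}(G).
\]
This has finite index in $G$, because $\phi^{-1}(G)$ has finite index in $E$. Its image $\phi(H)=\phi(G)\cap G$ likewise has finite index in $G$. So $\theta:=\phi|_H\colon H\to\phi(H)$ is an isomorphism between finite index subgroups of $G$, and it gives a class $[\theta]\in\Comm(G)$.

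\textbf{Step 2: $E$ embeds in $\Comm(G)$.} Since $G$ is normal in $E$, each $e\in E$ is an automorphism of $G$, and by hypothesis $E\le\Aut(G)\hookrightarrow\Comm(G)$. So we may view $E$ and $F$ as subgroups of $\Comm(G)$.

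\textbf{Step 3: $\phi(e)=[\theta]\,e\,[\theta]^{-1}$ in $\Comm(G)$ for every $e\in E$.} Let $e\in E$ and $h$ lie in the finite index subgroup $H\cap e^{-1}(H)$, where $e$ acts on $G$ by conjugation inside $E$. Then
\[
\theta(e(h))=\phi(ehe^{-1})=\phi(e)\,\phi(h)\,\phi(e)^{-1}=\phi(e)\bigl(\theta(h)\bigr).
\]
Hence $\theta\circ e=\phi(e)\circ\theta$ on a finite index subgroup of $G$. In $\Comm(G)$ this says $[\theta]\,e\,[\theta]^{-1}=\phi(e)$, where $e$ and $\phi(e)$ denote the images of these automorphisms in $\Comm(G)$. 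Two points need checking here. First, conjugation in $E$ agrees with the automorphism $e\in\Aut(G)$; this holds because $E\le\Aut(G)$ and $\Inn(G)$ is normalised with $e\,\mathrm{ad}_g\,e^{-1}=\mathrm{ad}_{e(g)}$. Second, $\phi(e)$ acting by conjugation on $\phi(H)\le G\le F$ is its automorphism action on $G$.

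\textbf{Step 4: conclude.} Step 3 gives $[\theta]E[\theta]^{-1}=F$ as subgroups of $\Comm(G)$. The equalities there hold in $\Comm(G)$, where elements agreeing on a finite index subgroup are identified. Injectivity of $\Aut(G)\to\Comm(G)$, which Lemma~\ref{lem:aut into comm} supplies in the weakly branch case, guarantees that these images are faithful copies of $E$ and $F$. Therefore the conjugate of the image of $E$ is exactly the image of $F$, not merely something agreeing with it up to finite index data.

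\textbf{Expected obstacle.} The delicate point is Step 3: keeping straight the identification $G=\Inn(G)$ inside $E$ versus inside $F$, and checking that conjugation by $e$ in $E$ matches the action of $e$ as an automorphism. This needs $\mathrm{ad}_{e(g)}=e\,\mathrm{ad}_g\,e^{-1}$ in $\Aut(G)$. Centrelessness is used to identify $g$ with $\mathrm{ad}_g$ unambiguously.
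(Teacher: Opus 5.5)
Your proposal is correct and takes essentially the same approach as the paper: you restrict the abstract isomorphism to a finite-index subgroup of $G\cong\Inn(G)$ to get a commensuration $\theta$, then check on $H\cap e^{-1}(H)$ that $\theta$ intertwines $e$ with $\phi(e)$, so conjugation by $[\theta]$ carries $E$ onto $F$ in $\Comm(G)$. The differences are minor. You verify $\theta\circ e=\phi(e)\circ\theta$ directly on elements, whereas the paper checks that the two commensurations induce the same conjugation on a finite-index subgroup of $\Inn(G)$ and then uses $\mathrm{ad}(g)^{\phi}=\mathrm{ad}(g\phi)$ together with centrelessness. Also, in your Step 4 the injectivity of $\Aut(G)\to\Comm(G)$ is already a hypothesis, so you need not cite Lemma~\ref{lem:aut into comm}; injectivity is used only so that $E$ and $F$ sit faithfully in $\Comm(G)$ (and for the converse), not for the equality $[\theta]E[\theta]^{-1}=F$ itself.
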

 \begin{proof}
 	For this proof we will write conjugation using exponentiation notation: namely, $a^b:=b^{-1} a b$. We will also write \( \Phi\) on the right to match our conjugation convention. 
 	
 	The hypotheses give us that 
 	\[
 	G \cong \Inn(G) \leq \Aut(G) \leq \Comm(G),
 	\]
 	and we will argue by dealing with \( \Inn(G)\) directly. 
 	
 	We write \( \ad (g)\) to denote the inner automorphism induced by conjugation by \( g\), for \( g \in G\). Now note that if \( \phi \) is a commensuration and \( g\) is in the domain of \( \phi\), then \( \ad(g)^{\phi} = \ad( g \phi)\). Hence if \( \phi, \psi \in \Comm(G)\) and \( u^{\phi} = u^{\psi}\) for all \( u \in U\), where \( U\) is some finite index subgroup of \( \Inn(G)\), then \( \phi = \psi\) in \( \Comm(G)\). We shall use this below.

 	\medspace
 	
 	Onto the argument; since conjugation induces an isomorphism, one direction is clear. 
 	
 	For the other direction let us suppose that \( \Phi: E \to F\) is an isomorphism. Then there is a finite index subgroup \( U \) of \( \Inn(G)\) such that \( U \Phi \) is also a finite index subgroup of \( \Inn(G)\). Thus \( \Phi\) induces a commensuration of \( G\), which we call \( \phi\). But note that \( U\) is a group of inner automorphisms which embeds into \( \Comm(G)\). We therefore have that,
 	\[
 	u^{\phi} = u \Phi, \text{ for all } u \in U.
 	\] 	
 	

 	Then, for any \( e \in E\),
 	\[
 	(u^e)\Phi = (u \Phi)^{e\Phi} = u^{\phi \cdot (e \Phi)},
 	\]
 	for all \( u \in U\).

 	On the other hand, 
 	
 	\[
 	(u^e)\Phi = u^{e \cdot \phi},
 	\]
 	as long as \( u^e \in U\). 
 	
 	Hence, conjugation by  \( \phi \cdot (e\Phi) \) and \( e \cdot\phi \) agree on \( U \cap U^{e^{-1}}\), which has finite index in \( \Inn(G)\). Hence \( e^{\phi}  = e \Phi\) as elements of \( \Comm(G)\).

 	This shows that \( E\) and \(F\)
are conjugate by \( \phi\)  in \( \Comm(G)\)	
 \end{proof}

\medskip

Let $\mathfrak G$ denote the first Grigorchuk group. Since $\mathfrak G$ is
centreless, we identify $\mathfrak G$ with $\Inn(\mathfrak G)$ and write
\[
\pi\colon \Aut(\mathfrak G)\longrightarrow \Out(\mathfrak G)
\]
for the natural quotient map.

Grigorchuk and Sidki \cite[Theorem 1]{GrigorchukSidki2004} proved that
\[
\Out(\mathfrak G)\cong \bigoplus_{\mathbb N} C_2;
\]
in particular, $\Out(\mathfrak G)$ is an elementary abelian $2$-group of
countably infinite rank. For every $d\geq 0$, choose a
$d$-dimensional subgroup
\[
F_d\leq \Out(\mathfrak G)
\]
and set
\[
E_d=\pi^{-1}(F_d).
\]
By Theorem~\ref{thm:finite-extensions}, the group $E_d$ is branch and fits into
an exact sequence
\[
1\longrightarrow \mathfrak G
\longrightarrow E_d
\longrightarrow F_d
\longrightarrow 1.
\]
In particular,
\[
[E_d:\mathfrak G]=|F_d|=2^d.
\]

We show that the groups $E_d$ are pairwise non-isomorphic. For this we recall
the relative modular homomorphism of an abstract commensurator.

Let $G$ be a group. An element of $\Comm(G)$ is represented by an isomorphism
\[
\theta\colon U\longrightarrow V
\]
between finite-index subgroups $U,V\leq G$. Define
\[
\Delta_G(\theta)
=\frac{[G:V]}{[G:U]}
\in \mathbb Q_{>0}^{\times}.
\]
This is independent of the chosen representative of the commensuration and
defines a homomorphism
\[
\Delta_G\colon\Comm(G)\longrightarrow\mathbb Q_{>0}^{\times}.
\]
Indeed, restricting $\theta$ to a finite-index subgroup changes the numerator
and denominator by the same factor, while multiplicativity follows by
restricting two composable commensurations to suitable finite-index
subgroups. 


We shall use the following elementary observation.

\begin{lemma}
	\label{lem:extension-index-modular}
	Let $G$ be a group and let $E$ and $E'$ contain copies of $G$ as subgroups of
	finite index. If
	\[
	\Phi\colon E\longrightarrow E'
	\]
	is an isomorphism, then $\Phi$ induces a commensuration $\theta$ of $G$
	satisfying
	\[
	\Delta_G(\theta)
	=\frac{[E:G]}{[E':G]}.
	\]
\end{lemma}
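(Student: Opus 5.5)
The plan is to take the commensuration to be the restriction of \(\Phi\) to \(U:=G\cap\Phi^{-1}(G)\). In what follows, \(G\) denotes the chosen copy inside \(E\), and we also write \(G\) for the chosen copy inside \(E'\). Since \(\Phi(G)\) has finite index in \(E'\) and \(G\) has finite index in \(E'\), the intersection \(G\cap\Phi(G)\) has finite index in \(E'\). Pulling back along \(\Phi\) shows that \(U\) has finite index in \(E\), and hence in \(G\). Its image is \(V:=\Phi(U)=\Phi(G)\cap G\), which is of finite index in \(G\). Thus \(\theta:=\Phi|_U\colon U\to V\) is an isomorphism between finite-index subgroups of \(G\), and so represents an element of \(\Comm(G)\). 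Here we implicitly compose with the fixed identifications of the copies of \(G\) in \(E\) and in \(E'\) with \(G\) itself.

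The index computation is then a chain of multiplicativity identities. In \(E\) we have
\[
[E:U]=[E:G]\,[G:U].
\]
In \(E'\) we have
\[
[E':V]=[E':G]\,[G:V].
\]
Because \(\Phi\) is an isomorphism carrying \(U\) onto \(V\), it gives \([E:U]=[E':V]\). Equating the two displays and rearranging yields
\[
\Delta_G(\theta)=\frac{[G:V]}{[G:U]}=\frac{[E:G]}{[E':G]}.
\]

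The only delicate point is well-definedness: a different choice of finite-index domain must give the same class in \(\Comm(G)\) and the same value of \(\Delta_G\). The paper has already noted that \(\Delta_G\) is independent of the representative. Moreover, any other admissible domain \(U'\subseteq G\cap\Phi^{-1}(G)\) restricts \(\theta\) to a finite-index subgroup, so it defines the same commensuration. For such a \(U'\), the same computation with \(U'\) and \(\Phi(U')\) in place of \(U\) and \(V\) gives the same ratio, since \([E:U']=[E':\Phi(U')]\) still holds. I do not expect any real obstacle here; the argument is pure index arithmetic.
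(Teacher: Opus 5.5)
Your proposal is correct and is essentially the paper's argument: you take the same \(U=G\cap\Phi^{-1}(G)\), \(V=G\cap\Phi(G)\) and \(\theta=\Phi|_U\), and you obtain the index identity by equating \([E:U]=[E':V]\). The paper instead factors \([E':V]\) through \(\Phi(G)\) and uses \([\Phi(G):V]=[G:U]\), which is the same index arithmetic.
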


\begin{proof}
	Put
	\[
	U=G\cap\Phi^{-1}(G)
	\qquad\text{and}\qquad
	V=G\cap\Phi(G).
	\]
	Then $\Phi$ restricts to an isomorphism
	\[
	\theta=\Phi|_U\colon U\longrightarrow V,
	\]
	and hence determines an element of $\Comm(G)$.
	
	Write
	\[
	m=[E:G]
	\qquad\text{and}\qquad
	n=[E':G].
	\]
	Since $\Phi(G)$ has index $m$ in $E'$, counting the index of $V$ in $E'$ in
	two ways gives
	\[
	n[G:V]
	=[E':V]
	=m[\Phi(G):V].
	\]
	Moreover, $\Phi$ induces an isomorphism from $G$ to $\Phi(G)$ taking $U$ to
	$V$, and therefore
	\[
	[\Phi(G):V]=[G:U].
	\]
	Consequently,
	\[
	\frac{[G:V]}{[G:U]}=\frac{m}{n},
	\]
	as required.
\end{proof}

Röver proved that the abstract commensurator $\Comm(\mathfrak G)$ is a
non-abelian simple group \cite[Theorem 1.3]{Rover}. It follows that every homomorphism from
$\Comm(\mathfrak G)$ to an abelian group is trivial. In particular,
\[
\Delta_{\mathfrak G}=1.
\]

Suppose now that $E_d\cong E_e$. By
Lemma~\ref{lem:extension-index-modular}, the corresponding commensuration
$\theta\in\Comm(\mathfrak G)$ satisfies
\[
\Delta_{\mathfrak G}(\theta)
=\frac{[E_d:\mathfrak G]}{[E_e:\mathfrak G]}
=2^{d-e}.
\]
Since $\Delta_{\mathfrak G}$ is trivial, we obtain $2^{d-e}=1$, and hence
$d=e$. We have therefore proved the following.

\begin{corollary}
	\label{cor:infinitely-many-grigorchuk-extensions}
	The first Grigorchuk group has infinitely many pairwise non-isomorphic finite
	branch extensions.
\end{corollary}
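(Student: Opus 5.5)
The statement is essentially assembled from what precedes it, so the plan is to package the argument already given into a proof. The plan has three steps: produce the family $\{E_d\}_{d\geq 0}$, check that each member is a finite branch extension of $\mathfrak G$, and show that the members are pairwise non-isomorphic.

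First, by \cite[Theorem 1]{GrigorchukSidki2004}, $\Out(\mathfrak G)\cong\bigoplus_{\mathbb N}C_2$ is an elementary abelian $2$-group of infinite rank. For every $d\geq 0$ we can therefore choose a subgroup $F_d\leq\Out(\mathfrak G)$ of order $2^d$, and we set $E_d=\pi^{-1}(F_d)\leq\Aut(\mathfrak G)$. Regard $E_d$ as an extension $1\to\mathfrak G\to E_d\to F_d\to 1$, identifying $\mathfrak G$ with $\Inn(\mathfrak G)$, which is legitimate by Lemma~\ref{lem:centreless}. In this extension the monodromy $\mu\colon F_d\to\Out(\mathfrak G)$ is just the inclusion, so it is injective. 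The first Grigorchuk group is finitely generated and branch on the binary tree, so Theorem~\ref{thm:finite-extensions} applies: condition (i) holds, hence (iv) holds, and $E_d$ is branch. Moreover $[E_d:\mathfrak G]=2^d$.

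Second, we show the $E_d$ are pairwise non-isomorphic. Suppose $\Phi\colon E_d\to E_e$ is an isomorphism. Lemma~\ref{lem:extension-index-modular} then produces a commensuration $\theta$ of $\mathfrak G$ with $\Delta_{\mathfrak G}(\theta)=2^d/2^e$. The modular map $\Delta_{\mathfrak G}\colon\Comm(\mathfrak G)\to\mathbb Q_{>0}^\times$ is a homomorphism into an abelian group. By Röver's theorem \cite[Theorem 1.3]{Rover}, $\Comm(\mathfrak G)$ is non-abelian simple, so $\Delta_{\mathfrak G}$ is trivial. Hence $2^{d-e}=1$, so $d=e$. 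The family $\{E_d\}$ is therefore infinite and pairwise non-isomorphic, which proves the corollary.

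The only genuinely delicate point is the claim that $\Delta_G$ is a well-defined homomorphism on $\Comm(G)$; the rest is bookkeeping. Well-definedness follows because restricting $\theta\colon U\to V$ to a finite-index $U'\leq U$ multiplies both $[G:U]$ and $[G:V]$ by $[U:U']=[V:\theta(U')]$. Multiplicativity follows by composing after shrinking domains. The remaining care is in applying Lemma~\ref{lem:extension-index-modular}: $\Phi(\mathfrak G)$ need not equal $\mathfrak G$, which is exactly why the lemma intersects with $\Phi^{-1}(G)$. Simplicity of $\Comm(\mathfrak G)$ is imported from Röver and is not reproved.
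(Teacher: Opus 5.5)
Your proposal is correct and follows the paper's argument essentially step for step. You take $E_d=\pi^{-1}(F_d)$ from the Grigorchuk--Sidki description of $\Out(\mathfrak G)$, obtain branchness from Theorem~\ref{thm:finite-extensions}, and separate the $E_d$ by comparing indices via Lemma~\ref{lem:extension-index-modular}, using that $\Delta_{\mathfrak G}$ is trivial by Röver's simplicity theorem. Your explicit check that the monodromy is the inclusion $F_d\hookrightarrow\Out(\mathfrak G)$ is a detail the paper leaves implicit.
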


\begin{remark}
	The preceding argument distinguishes extensions arising from subgroups of
	$\Out(\mathfrak G)$ of different dimensions. It does not address whether two
	distinct $d$-dimensional subgroups of $\Out(\mathfrak G)$ can give
	non-isomorphic extensions for a fixed value of $d$.
\end{remark}

\bibliographystyle{plain}

\bibliography{refs}

\end{document}